\renewcommand{\O}{\mathcal{O}}
\renewcommand{\P}{\mathbb{P}}
\newcommand{\Z}{\mathbb{Z}}
\newcommand{\Ext}{\mathrm{Ext}}
\newcommand{\Hom}{\mathrm{Hom}}
\newcommand{\RHom}{\mathrm{RHom}}
\newcommand{\RGamma}{\mathrm{R}\Gamma\,}
\newcommand{\Dbcoh}{D^b_{\!\mathrm{coh}}}
\newcommand{\mA}{\mathcal{A}}
\newcommand{\dual}{{\scriptstyle\vee}}
\newcommand{\iso}{\simeq}
\newcommand{\caniso}{\cong}
\newcommand{\sldots}{\, \ldots \,}
\newcommand{\rdim}{\operatorname{rdim}}
\newcommand{\highlight}[1]{\colorbox{gray!10}{$#1$}}
\declaretheoremstyle[
headformat=\NUMBER.\,\NAME\NOTE,
postheadspace=.5em,
spaceabove=6pt,
headfont=\normalfont\small\scshape,
notefont=\normalfont\small\mdseries, notebraces={(}{)},
bodyfont=\normalfont\itshape
]{plainswap}
\declaretheoremstyle[
headformat=\NUMBER.\,\NAME\NOTE,
postheadspace=.5em,
spaceabove=6pt,
headfont=\normalfont\small\scshape,
notefont=\normalfont\mdseries, notebraces={(}{)},
bodyfont=\normalfont
]{definitionswap}
\declaretheoremstyle[
headformat=\NAME\NOTE,
postheadspace=.5em,
spaceabove=6pt,
headfont=\normalfont\itshape,
notefont=\mdseries, notebraces={(}{)},
bodyfont=\normalfont
]{myremark}
\declaretheorem[style=plainswap, name=Theorem, sharenumber=subsection]{theorem}
\declaretheorem[style=plainswap, numberlike=theorem, name=Proposition]{proposition}
\declaretheorem[style=plainswap, numberlike=theorem, name=Lemma]{lemma}
\declaretheorem[style=plainswap, numberlike=theorem, name=Corollary]{corollary}
\declaretheorem[style=plainswap, numberlike=theorem, name=Conjecture]{conjecture}
\newtheorem*{claim}{Claim}
\theoremstyle{definition}
\declaretheorem[style=definitionswap, numberlike=theorem, name=Definition]{definition}
\theoremstyle{myremark}
\newtheorem*{remark}{Remark}
\theoremstyle{remark}
\begin{document}

\title{Rouquier dimension of some blow-ups}
\author{Dmitrii Pirozhkov}
%\address{Department of Mathematics, Columbia University, New York, New York, USA}
%\email{dpirozhkov@math.columbia.edu}
%\date{July 4, 2018}

\begin{abstract}
  Rapha\"{e}l Rouquier introduced an invariant of triangulated categories which is known as Rouquier dimension. Orlov conjectured that for any smooth quasi-projective variety $X$ the Rouquier dimension of $D^b_{\mathrm{coh}}(X)$ is equal to $\mathrm{dim}\, X$. In this note we show that some blow-ups of projective spaces satisfy Orlov's conjecture. This includes a blow-up of $\mathbb{P}^2$ in nine arbitrary distinct points, or a blow-up of three distinct points lying on an exceptional divisor of a blow-up of $\mathbb{P}^3$ in a line. In particular, our method gives an alternative proof of Orlov's conjecture for del Pezzo surfaces, first established by Ballard and Favero.
\end{abstract}

\maketitle

\section{Introduction}
Rapha\"{e}l Rouquier introduced in \cite{rouquier} an invariant of triangulated categories which became known as Rouquier dimension. For a scheme $X$ the bounded derived category of coherent sheaves $\Dbcoh(X)$ is a triangulated category which reflects many geometric properties of $X$. A natural question to ask is what is the geometric meaning of the Rouquier dimension of the category $\Dbcoh(X)$ in terms of $X$.

In general, Rouquier dimension is very difficult to determine. Rouquier proved that for any smooth variety there is an inequality $\dim X \leq \rdim \Dbcoh(X) \leq 2 \dim X$. However, in all cases where Rouquier dimenson of a smooth variety was computed exactly, it happened to coincide with the usual geometric dimension of $X$. Dmitri Orlov made a conjecture \cite{orlov-curves}:

\begin{conjecture}[Orlov]
  Let $X$ be a smooth quasi-projective variety. Then the Rouquier dimension of $\Dbcoh(X)$ equals $\dim(X)$.
\end{conjecture}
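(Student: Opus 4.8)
The plan is to separate the two inequalities. The lower bound $\dim X \leq \rdim \Dbcoh(X)$ is exactly Rouquier's theorem quoted above, so the whole problem is the upper bound $\rdim \Dbcoh(X) \leq \dim X =: n$. First I would reduce to the smooth projective case. Writing $X$ as a dense open subscheme of a projective variety and resolving singularities (here I use characteristic zero), one obtains a smooth projective $\overline{X} \supseteq X$ with $\dim \overline X = n$. The restriction functor $j^* \colon \Dbcoh(\overline X) \to \Dbcoh(X)$ is triangulated and essentially surjective, and it carries the $d$-th thickening $\Egen_d$ of a strong generator $E$ of $\Dbcoh(\overline X)$ into $\langle j^* E \rangle_d$; hence generation times do not increase and $\rdim \Dbcoh(X) \leq \rdim \Dbcoh(\overline X)$. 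So it suffices to bound the Rouquier dimension of a smooth projective $n$-fold.

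The second step is to reduce the upper bound to a statement about the diagonal. On $X \times X$ the structure sheaf $\O_\Delta$ is the kernel of the identity Fourier--Mukai functor, so $F \iso \pr_{2*}(\pr_1^* F \otimes \O_\Delta)$ for every $F \in \Dbcoh(X)$. The key observation is that a kernel of the form $A \boxtimes B = \pr_1^* A \otimes \pr_2^* B$ transforms $F$ into $B \otimes \RGamma(X, F \otimes A)$ by the projection formula and base change, and this object lies in $\langle B \rangle_1$. Consequently, if there is a length-$n$ resolution of the diagonal
\begin{equation*}
  0 \to A_n \boxtimes B_n \to \cdots \to A_1 \boxtimes B_1 \to A_0 \boxtimes B_0 \to \O_\Delta \to 0,
\end{equation*}
then applying the triangulated functor $K \mapsto \pr_{2*}(\pr_1^* F \otimes K)$ to this resolution shows that $F \iso \Phi_{\O_\Delta}(F)$ lies in $\Egen_{n+1}$ for $E = \bigoplus_i B_i$, giving $\rdim \Dbcoh(X) \leq n$. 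Thus the entire conjecture is reduced to the existence, for every smooth projective $X$, of a resolution of the diagonal of length $\dim X$.

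The third and decisive step is to construct such a resolution. For $X = \P^n$ this is Beilinson's resolution, which has length exactly $n$; this recovers $\rdim \P^n = n$ and is the engine behind everything downstream. For a general $X$ I would choose a closed embedding $\iota \colon X \monoarrow \P^N$ and restrict Beilinson's resolution of $\O_{\Delta_{\P^N}}$ along $\iota \times \iota$. This does produce a resolution of $\O_{\Delta_X}$ by external tensor products of the bundles $\iota^* \O(-i)$, but its length is $N$, the dimension of the ambient space, rather than $n$.

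The main obstacle is precisely this gap between $N$ and $n$: one must compress the resolution, truncating it at step $n$ by proving that the $n$-th syzygy is again a finite sum of external products, which amounts to exhibiting a length-$\dim X$ resolution of the diagonal intrinsic to $X$. Such resolutions are presently known only for restricted classes of varieties --- projective spaces, Grassmannians and flag varieties, smooth toric varieties, and, as the present note establishes, certain iterated blow-ups of projective spaces, where Orlov's blow-up semiorthogonal decomposition lets one transport a short diagonal resolution from the base and the exceptional divisors while controlling the number of cones. Producing such a resolution for an \emph{arbitrary} smooth projective variety is the unresolved heart of the conjecture, and it is at this final step that the general argument stops; the contribution of this paper is to carry the construction through in the specific blow-up geometries described in the introduction.
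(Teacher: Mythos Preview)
The statement is an open conjecture; the paper does not prove it, and there is no ``paper's own proof'' to compare against. Your proposal correctly recognises this and stops at the point where a length-$\dim X$ resolution of the diagonal would be needed, so as a sketch of the state of the art on the upper bound it is honest. Two small technical slips: with the paper's indexing convention $B \otimes \RGamma(X, F \otimes A)$ lies in $\langle B \rangle_0$, not $\langle B \rangle_1$; and the reduction to the projective case also requires essential surjectivity of $j^*$ on bounded complexes, which is true but worth justifying.

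The one substantive inaccuracy is your description of what this paper actually does. The paper does \emph{not} transport a short resolution of the diagonal through Orlov's blow-up formula. Its mechanism is different: for the blow-ups in question it produces a semiorthogonal decomposition
\[
  \Dbcoh(Y) = \langle \mathcal{T}_0, \mathcal{T}_1, \ldots, \mathcal{T}_n \rangle
\]
with exactly $n+1 = \dim Y + 1$ pieces, each of Rouquier dimension zero, and then applies the gluing estimate (Lemma~\ref{lem: rouquier dimension under glueing}). The pieces are not all generated by single exceptional objects; most are equivalent to the derived category of representations of a Dynkin quiver of type $D_4$ (or $A_2$, $A_3$), which has Rouquier dimension zero by Gabriel's theorem. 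The blow-up exceptional objects are grouped with the pulled-back line bundles and then mutated into position. So the paper's contribution is categorical (finding zero-dimensional blocks via quiver representation theory), not a diagonal-resolution argument.
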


The conjecture is known in multiple cases:
\begin{itemize}
\item projective spaces, quadrics, Grassmannians \cite{rouquier};
\item del Pezzo surfaces, Hirzebruch surfaces, and toric surfaces with nef anti-canonical divisors; Fano threefolds of type $V_5$ and $V_{22}$ \cite{ballard-favero};
\item direct products of the varieties above \cite{yang};
\item all smooth proper curves \cite{orlov-curves};
\item a product of two Fermat elliptic curves and a Fermat K3 surface \cite[Th.~1.6]{ballard-favero-katzarkov}.
\end{itemize}

In this note we give new examples of varieties satisfying Orlov's conjecture. Our examples are some particular blow-ups of projective spaces. This gives an alternative, easier proof for del Pezzo surfaces, but also covers some higher-dimensional cases. In an arbitrary dimension we show the following:

{
  \hypersetup{hidelinks}
  \renewcommand{\thetheorem}{\ref{thm: blowing up projective spaces}}
  \begin{theorem}
    Let $\{ Z_b \}_{b \in B}$ be a set of at most three disjoint linear subspaces of $\P^n$  such that every subspace $Z_b$ is either a point or has codimension two. Denote by $Y$ the blow-up of the projective space in the union $\sqcup_{b \in B} Z_b$. Then the variety $Y$ satisfies Orlov's conjecture.
  \end{theorem}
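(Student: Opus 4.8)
Since $Y$ is a smooth projective variety of dimension $n$, Rouquier's general lower bound gives $\rdim \Dbcoh(Y) \ge n$ for free, so the whole content is the upper bound $\rdim \Dbcoh(Y) \le n$; equivalently, I must produce a single object $G \in \Dbcoh(Y)$ of generation time at most $n$, i.e.\ with $\Dbcoh(Y) = \langle G\rangle_{n+1}$. Because the centers $Z_b$ are pairwise disjoint, $Y$ is an iterated blow-up $\P^n = Y_0 \leftarrow Y_1 \leftarrow \cdots \leftarrow Y_{|B|} = Y$ in which, at every stage, the strict transform of the next center is a closed subvariety isomorphic to the original $Z_b$ and lying away from all earlier exceptional loci; in particular its normal bundle is again $N_{Z_b/\P^n} \caniso \O(1)^{\oplus \codim Z_b}$, so the exceptional divisor $E_b = \P(N_{Z_b/\P^n})$ is a product $\P^{\codim Z_b - 1}\times Z_b$ of dimension exactly $n-1$ in all cases (codimension two or codimension $n$).

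First I would collect the ingredients. Beilinson's resolution of the diagonal gives $\Dbcoh(\P^m) = \langle T_m\rangle_{m+1}$ with $T_m := \bigoplus_{i=0}^m \O_{\P^m}(i)$, a point has generation time $0$, and an external product of Beilinson resolutions shows that $E_b \caniso \P^{\codim Z_b - 1}\times Z_b$ has generation time $n-1$, realized by the bundle $T_{E_b} := \bigoplus_{i,j}\O_{E_b}(j)\otimes p_b^*\O_{Z_b}(i)$ (with $p_b\colon E_b\to Z_b$ the projection). Orlov's blow-up theorem, applied stepwise, produces a semiorthogonal decomposition
\[
\Dbcoh(Y) \;=\; \big\langle\, \Phi_1\Dbcoh(Z_{b(1)}),\ \ldots,\ \Phi_N\Dbcoh(Z_{b(N)}),\ \ \pi^*\Dbcoh(\P^n) \,\big\rangle,
\]
where $\pi\colon Y\to\P^n$ is the total blow-down, each $\Phi_k$ is one of Orlov's fully faithful functors $i_{E_b*}\big((-)\otimes\O_{E_b}(j)\big)\circ p_b^*$, and a center of codimension $c$ contributes $c-1$ of the $\Dbcoh(Z_b)$-blocks. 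Set $G := \pi^*T_n \,\oplus\, \bigoplus_b G_b$, where $G_b := \bigoplus_j i_{E_b*}\big(\O_{E_b}(j)\otimes p_b^*T_{Z_b}\big)$ runs over the twists that occur. Since $\pi^*$ and the $\Phi_k$ are exact, every semiorthogonal projection of a given object of $\Dbcoh(Y)$ lies in $\langle G\rangle$: the $\pi^*\Dbcoh(\P^n)$-component in $\langle\pi^*T_n\rangle_{n+1}$, and each $\Phi_k\Dbcoh(Z_b)$-component in $\langle G_b\rangle_{\dim Z_b + 1}\subseteq\langle G_b\rangle_{n-1}$, as $\dim Z_b \le n-2$.

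The main obstacle, and really the only substantive point, is that reconstructing an object from its $N+1$ semiorthogonal projections through $N$ gluing triangles is worthless for generation time: two blocks of generation time near $n$ glue only to generation time near $2n$. To hold the generation time at $n$ one must not argue block by block; instead I would build a \emph{single} resolution of the diagonal of $Y$ of length $n$ whose terms are external products $G_i\boxtimes H_i$ with each $G_i$ a shift of a summand of $G$, which at once forces $\Dbcoh(Y)=\langle G\rangle_{n+1}$. Concretely, I would pull back Beilinson's length-$n$ resolution of $\O_{\Delta_{\P^n}}$ along $\pi\times\pi$ — the result resolves the structure sheaf of the fibre product $Y\times_{\P^n}Y$ rather than $\O_{\Delta_Y}$, the discrepancy being concentrated on the components $E_b\times_{Z_b}E_b$ — and then graft onto it the relative Beilinson resolutions along the $E_b$ to cut those extra components down to $\Delta_Y$; since each graft lives in cohomological degrees $0,\ldots,\dim E_b = n-1$, the total length stays equal to $n$. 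Making this rigorous — proving exactness off $\Delta_Y$, tracking the twists through Orlov's mutation functors, and checking that nothing degrades across the successive blow-ups — is the technical heart of the argument, and it is precisely this bookkeeping that is governed by the hypotheses: the codimension constraint $\codim Z_b \in \{2,n\}$ is what makes $\dim E_b = n-1$, so that every graft fits inside the available $n$ degrees, and the bound of three centers is what keeps the simultaneous grafting of all the exceptional divisors under control.
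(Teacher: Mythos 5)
Your reduction to the upper bound, the use of Orlov's blow-up decomposition, and your diagnosis that block-by-block glueing of the components as you have grouped them is useless for generation time are all fine, but the step you offer in its place is exactly the part that is missing, and the sketch of it contains a false claim. Pulling back Beilinson's resolution of $\O_{\Delta_{\P^n}}$ along $\pi\times\pi$ does \emph{not} in general resolve $\O_{Y\times_{\P^n}Y}$: it computes the derived pullback of $\O_{\Delta_{\P^n}}$, and when a center $Z_b$ is a point with $n\ge 3$ the fibre product acquires the component $E_b\times_{Z_b}E_b\iso\P^{n-1}\times\P^{n-1}$ of dimension $2n-2>n$, so the intersection is excess-dimensional and the pulled-back Koszul complex has higher cohomology sheaves rather than being a resolution. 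The subsequent ``grafting'' of relative Beilinson resolutions along the $E_b$ is not constructed at all: you do not say what maps of complexes realize it, why the result is exact off $\Delta_Y$, why its terms remain external products $G_i\boxtimes H_i$ with $G_i$ summands of your $G$, or why taking cones with complexes of length $n-1$ keeps the number of generation steps at $n$ (cones generically add steps; ``the degrees fit inside $0,\dots,n-1$'' is not an argument about generation time). Tellingly, the hypothesis of at most three centers plays no visible role in your mechanism, whereas it must enter somewhere essential; as sketched, nothing would break with four disjoint points, so the sketch has not engaged with the actual difficulty.

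For comparison, the paper's route avoids the diagonal entirely: it keeps only the first two line bundles $\O,\O(1)$ as separate exceptional blocks and, for each $i$, merges the pulled-back $\O(i)$ (for $2\le i\le n$) with the up to three exceptional objects $(S_b)_i$ supported on the exceptional divisors (produced by Lemmas~\ref{lem: blowing up points} and~\ref{lem: blowing up codimension two}, after passing to the dual collections) into a subcategory whose morphism algebra is that of a quiver with one source and $|B|\le 3$ sinks, i.e.\ of type $A_1$, $A_2$, $D_3$ or $D_4$. By Gabriel's theorem each such block has Rouquier dimension zero (Lemma~\ref{lem: dynkin quivers have rouquier dimension zero}), so $\Dbcoh(Y)$ is decomposed into $n+1$ blocks of dimension zero and Lemma~\ref{lem: rouquier dimension under glueing} gives $\rdim Y\le n$; this is where the bound ``three'' is used, since $|B|\ge 4$ would leave the Dynkin range. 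If you want to salvage your approach you would need either to carry out the grafted resolution of $\O_{\Delta_Y}$ in full (handling the excess components for point centers), or to regroup your generator's summands so that the semiorthogonal pieces themselves have Rouquier dimension zero, which is in effect the paper's argument.
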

  \addtocounter{theorem}{-1}
}

Let us illustrate the idea of the proof. If a category admits a full exceptional collection of length $n+1$, then each exceptional object generates a subcategory of Rouquier dimension zero, and the standard estimate on the Rouquier dimension of the glueing (Lemma~\ref{lem: rouquier dimension under glueing}) bounds it from above by $n$. This is sufficient to establish Orlov's conjecture for projective spaces, but not enough for any blow-up, since the exceptional collection becomes too long. However, a category generated by a single exceptional object is not the only triangulated category of Rouquier dimension zero. The derived category of representations of any ADE quiver is another example (Lemma~\ref{lem: dynkin quivers have rouquier dimension zero}). In the proof of Theorem~\ref{thm: blowing up projective spaces} we construct a semiorthogonal decomposition for the derived category of the blow-up with $2$ exceptional objects and $n-1$ components equivalent to the derived category of the $D_4$ quiver, and conclude by the same estimate for the glueing.

The procedure outlined above preserves two exceptional objects from the original full exceptional collection on the projective space. In the cases of low dimension, i.e., $\P^2$ and $\P^3$, this may be used to bound the Rouquier dimension for a tower of blow-ups, with three levels for $\P^2$ and two levels for $\P^3$:

{
  \hypersetup{hidelinks}
  \renewcommand{\theproposition}{\ref{prop: surfaces satisfying orlov conjecture}}
  \begin{proposition}
    Consider a tower of maps
    \[
      X_3 \to X_2 \to X_1 \to X_0 = \P^2
    \]
    where each map $\pi_i\colon X_i \to X_{i-1}$ is a blow-up in at most three distinct points. Then $X_3$ satisfies Orlov's conjecture, i.e., $\rdim(X_3) = 2$.
  \end{proposition}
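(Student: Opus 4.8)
The inequality $\rdim \Dbcoh(X_3) \geq \dim X_3 = 2$ is the general lower bound of Rouquier recalled in the introduction, so the only thing to prove is $\rdim \Dbcoh(X_3) \leq 2$. The plan is to produce a semiorthogonal decomposition $\Dbcoh(X_3) = \langle \mathcal{Q},\, \mathcal{L},\, \mathcal{L}' \rangle$ with all three components of Rouquier dimension zero and then apply the glueing estimate of Lemma~\ref{lem: rouquier dimension under glueing}, which for three such pieces yields the bound $2$. Here $\mathcal{L}$ and $\mathcal{L}'$ are to be the two exceptional objects ``preserved'' by the construction behind Theorem~\ref{thm: blowing up projective spaces}, namely the line bundles on $X_3$ pulled back from $\O_{\P^2}(1)$ and $\O_{\P^2}(2)$ (they form an exceptional pair because the blow-down maps all satisfy $\mathrm{R}\pi_*\O=\O$), and $\mathcal{Q}$ is to be equivalent to the bounded derived category of representations of a finite disjoint union of ADE Dynkin quivers, which is of Rouquier dimension zero by Lemma~\ref{lem: dynkin quivers have rouquier dimension zero} (a disjoint union of quivers giving a completely orthogonal glueing, $\rdim(\mathcal{A}\times\mathcal{B})=\max(\rdim\mathcal{A},\rdim\mathcal{B})$, which does not raise Rouquier dimension).

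I would build this decomposition by induction along the three-step tower, with inductive hypothesis that $\Dbcoh(X_i)$ carries a semiorthogonal decomposition $\langle \mathcal{Q}_i,\, \mathcal{L}_i,\, \mathcal{L}'_i \rangle$ of this shape, where $\mathcal{L}_i,\mathcal{L}'_i$ are the pullbacks of $\O_{\P^2}(1),\O_{\P^2}(2)$ and $\mathcal{Q}_i$ is the derived category of a disjoint union of ADE quivers. The case $i=0$ is the Beilinson collection on $\P^2$ (with $\mathcal{Q}_0$ generated by $\O_{\P^2}(2)$), and the passage from $X_0$ to $X_1$ is exactly the $n=2$ instance of the construction proving Theorem~\ref{thm: blowing up projective spaces}, which gives $\mathcal{Q}_1$ equivalent to the derived category of representations of $D_4$ (or a smaller ADE quiver if fewer than three points are blown up). For a general step, write $\pi_{i+1}\colon X_{i+1}\to X_i$ for the blow-up at the distinct points $p_1,\dots,p_k$ ($k\le 3$), with exceptional $(-1)$-curves $E_1,\dots,E_k \iso \P^1$. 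Orlov's blow-up decomposition reads
\[
  \Dbcoh(X_{i+1}) \;=\; \langle\, \mathbf{L}\pi_{i+1}^*\Dbcoh(X_i),\; \O_{E_1},\dots,\O_{E_k}\,\rangle,
\]
the last $k$ terms being mutually completely orthogonal exceptional objects (disjoint supports), all of them right-orthogonal to the pullback since $\RHom_{X_{i+1}}(\O_{E_j}, \mathbf{L}\pi_{i+1}^*F)=0$ for every $F \in \Dbcoh(X_i)$. Pulling back the decomposition of $X_i$ produces $\langle \mathbf{L}\pi_{i+1}^*\mathcal{Q}_i,\, \mathbf{L}\pi_{i+1}^*\mathcal{L}_i,\, \mathbf{L}\pi_{i+1}^*\mathcal{L}'_i,\, \O_{E_1},\dots,\O_{E_k}\rangle$, and the task is to mutate this into the shape $\langle \mathcal{Q}_{i+1},\, \mathcal{L}_{i+1},\, \mathcal{L}'_{i+1}\rangle$ with $\mathcal{L}_{i+1},\mathcal{L}'_{i+1}$ again the pullbacks of $\O_{\P^2}(1),\O_{\P^2}(2)$ and $\mathcal{Q}_{i+1}$ again a disjoint union of ADE quivers.

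This last step is where essentially all of the work lies, and it is also the reason the statement stops at three levels over $\P^2$. Merely regrouping the enlarged collection fails: generically each new object $\O_{E_j}$ has a single degree-zero morphism to each of the pulled-back line bundles — indeed $\RHom_{X_{i+1}}(\mathbf{L}\pi_{i+1}^*G, \O_{E_j}) = \RHom_{X_i}(G, \O_{p_j})$, which is one-dimensional when $G$ is a line bundle and vanishes when $G$ is a torsion sheaf avoiding $p_j$ — so the pulled-back generators of $\mathcal{Q}_i$ would connect $\mathcal{Q}_{i+1}$ into a graph with a high-valence ``hub'' vertex, far outside the ADE class, and no mutation can fix a category of the wrong representation type. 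The construction must therefore use genuine mutations of $\Dbcoh(X_{i+1})$: one slides the new objects $\O_{E_j}$ leftward past $\mathbf{L}\pi_{i+1}^*\mathcal{Q}_i$ and a preserved line bundle and splices them, together with $\mathbf{L}\pi_{i+1}^*\mathcal{Q}_i$, into several separate ADE components — so that a $D_4$-component becomes, say, $D_4 \sqcup A_3$ or $E_7$ according to the configuration. To make this precise I would enumerate the finitely many combinatorial types of configurations of $\le 3$ points arising in the tower — points in general position, collinear triples, and points lying on exceptional curves of earlier blow-ups (infinitely near points, where $\mathbf{L}\pi_{i+1}^*$ of the relevant structure sheaves picks up an extra cohomology term and the local Ext-picture changes) — and in each case compute the relevant $\RHom$-groups (which reduce to cohomology of line bundles on chains of $\P^1$'s) and exhibit an explicit sequence of mutations yielding a disjoint union of ADE diagrams in which every vertex has valence $\le 3$. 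Since each level enlarges the quiver by at most three vertices, after three levels the quiver has at most ten vertices, which can still be assembled from ADE pieces; a fourth level over $\P^2$ is beyond the reach of this scheme. I expect the genuine obstacle to be precisely this combinatorial control of mutations across all the degenerate configurations, rather than any individual cohomology computation — with the two preserved line bundles playing a double role, both shielding part of the collection and supplying the room along which the new exceptional objects are slid and merged.
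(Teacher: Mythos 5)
Your overall strategy (three Rouquier-dimension-zero components plus the glueing bound of Lemma~\ref{lem: rouquier dimension under glueing}) is sound, but the argument has a genuine gap, and it sits exactly at the step you yourself flag as ``where essentially all of the work lies''. Your inductive hypothesis insists on preserving both pullbacks of $\O_{\P^2}(1)$ and $\O_{\P^2}(2)$ at every level and on packing everything else into a single component $\mathcal{Q}_{i+1}$ that must then be shown to have Rouquier dimension zero (a disjoint union of ADE quiver categories). You never establish that the required mutations exist: the claim that the hub-shaped Hom-quiver can always be reorganized, after sliding the objects $\O_{E_j}$ past a preserved line bundle, into ADE pieces such as $D_4 \sqcup A_3$ or $E_7$, uniformly over all configurations (collinear triples, points on earlier exceptional curves, infinitely near points), is presented as a programme (``I would enumerate\dots'', ``I expect\dots'') rather than proved, and it is not obviously true --- you give no argument that the subcategories produced by these mutations are of finite representation type. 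As written, the proposal reduces the proposition to an unproven combinatorial assertion that is at least as hard as the proposition itself.

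The difficulty is self-inflicted and is avoided in the paper. For a surface, Proposition~\ref{prop: apriori bounds for blow-ups} needs only \emph{one} exceptional line bundle per level: writing $\Dbcoh(X_i) = \langle \mA, L \rangle$ with $L$ a line bundle, the blow-up of at most three points yields $\Dbcoh(X_{i+1}) = \langle \pi^*\mA, \mathcal{T} \rangle$, where $\mathcal{T}$ is generated by $L$ together with the at most three exceptional objects of Lemma~\ref{lem: blowing up points}; the only nonzero Homs inside $\mathcal{T}$ go from $L$ to the new objects, so $\mathcal{T}$ is the derived category of a single quiver of type at worst $D_4$ and has Rouquier dimension zero by Lemma~\ref{lem: dynkin quivers have rouquier dimension zero}. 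Morphisms from objects of $\pi^*\mA$ --- in particular from the rdim-zero components built at earlier levels, which ride along unchanged under the fully faithful pullback --- into the new exceptional objects are simply allowed: semiorthogonality is one-sided, and Lemma~\ref{lem: rouquier dimension under glueing} does not see them, so no ``hub'' ever needs to be removed. One then mutates $\mathcal{T}$ to the far left so that the next remaining line bundle again occupies the rightmost slot, and repeats; each of the three levels consumes one of the three Beilinson line bundles, and after three levels one has a decomposition of $\Dbcoh(X_3)$ into three rdim-zero components, whence $\rdim X_3 \leq 2$. So the fix for your argument is not to preserve two line bundles and merge quivers by hand, but to spend one line bundle per level and leave the earlier quiver components untouched on the left.
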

  \addtocounter{proposition}{-1}
}
{
  \hypersetup{hidelinks}
  \renewcommand{\theproposition}{\ref{prop: threefolds satisfying orlov conjecture}}
  \begin{proposition}
    Consider a tower of maps
    \[
      X_2 \to X_1 \to X_0 = \P^3
    \]
    where each map $\pi_i\colon X_i \to X_{i-1}$ is a blow-up of a disjoint union of points and lines, at most three per level, where by a line we mean a strict transform of an one-dimensional linear subspace in $\P^3$. Then $X_2$ satisfies Orlov's conjecture, i.e., $\rdim(X_2) = 3$.
  \end{proposition}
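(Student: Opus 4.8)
The lower bound $\rdim(X_2)\geq 3$ is immediate from Rouquier's inequality, since $X_2$ is a smooth threefold; the content is therefore the bound $\rdim(X_2)\leq 3$. By Lemma~\ref{lem: rouquier dimension under glueing} it suffices to exhibit a semiorthogonal decomposition of $\Dbcoh(X_2)$ into four pieces of Rouquier dimension zero, as this gives $\rdim(X_2)\leq(1+1+1+1)-1=3$; each piece will be either the subcategory generated by a single exceptional object or, by Lemma~\ref{lem: dynkin quivers have rouquier dimension zero}, a copy of the derived category of representations of a Dynkin quiver. The plan is to run the construction behind Theorem~\ref{thm: blowing up projective spaces} one blow-up further, using the two exceptional objects that the first round leaves behind as the seeds for the second, so that $\Dbcoh(X_2)$, like $\Dbcoh(\P^3)$ and $\Dbcoh(X_1)$, acquires a decomposition into four pieces of Rouquier dimension zero.

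Write $\pi_i\colon X_i\to X_{i-1}$ for the blow-up maps. The proof of Theorem~\ref{thm: blowing up projective spaces} applied to $\pi_1\colon X_1\to\P^3$ produces a semiorthogonal decomposition of $\Dbcoh(X_1)$ into two exceptional line bundles $E_1,E_2$ and two copies $\mathcal{D}_1,\mathcal{D}_2$ of the derived category of the $D_4$ quiver: the four Beilinson bundles $\O_{\P^3},\dots,\O_{\P^3}(3)$, pulled back and twisted by the exceptional divisors of $\pi_1$, appear as $E_1$, $E_2$ and the two central vertices, and the six exceptional objects contributed by the at most three centers of $\pi_1$ (two per center) occupy the six leaves. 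Now feed this into Orlov's blow-up formula for $\pi_2$; treating the pairwise disjoint centers of $\pi_2$ one at a time realises $\Dbcoh(X_2)$ as a semiorthogonal glueing of $\pi_2^*\mathcal{D}_1$, $\pi_2^*\mathcal{D}_2$, $\pi_2^*E_1$, $\pi_2^*E_2$ and the at most six new exceptional objects (two per blown-up point, since a point is codimension three in the threefold $X_1$; two per blown-up line, forming a copy of $\Dbcoh(\P^1)$).

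The core of the argument is to reorganise this collection, through a chain of mutations, back into four blocks
\[
  \Dbcoh(X_2)\iso\langle\,\mathcal{D}_1',\ \mathcal{D}_2',\ \pi_2^*\mathcal{D}_1,\ \pi_2^*\mathcal{D}_2\,\rangle,
\]
where $\mathcal{D}_1'$, $\mathcal{D}_2'$ are new $D_4$-type stars whose central vertices are twists of $\pi_2^*E_1$, $\pi_2^*E_2$ and whose leaves are the new exceptional objects, one taken from each center of $\pi_2$. Two facts make this possible, and both use the hypotheses of the proposition. First, the disjointness of the centers of $\pi_2$ makes the new exceptional objects attached to distinct centers mutually orthogonal, so the leaves of each star do not interact with one another. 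Second, the linearity of each center --- a point, or the strict transform of a line in $\P^3$ --- forces $\pi_2^*E_i$ to restrict to a line bundle of controlled degree on each new exceptional divisor, so that after appropriate twists every center-to-leaf graded Hom is a single arrow; for a curve of higher degree the corresponding Hom would be multi-dimensional, forcing a multiple arrow that spoils the Dynkin property. The internal arrows of a blown-up point's pair of objects, the double arrow inside a blown-up line's copy of $\Dbcoh(\P^1)$, and all morphisms among the original Beilinson bundles end up as glueing data between blocks, where they are irrelevant to Rouquier dimension; and when a center is missing the relevant star simply degenerates to $\mathbb{A}_{\leq 3}$, still Dynkin. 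With four pieces of Rouquier dimension zero, Lemma~\ref{lem: rouquier dimension under glueing} gives $\rdim(X_2)\leq 3$, and together with the lower bound this proves $\rdim(X_2)=3$.

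I expect the mutation step to be the main obstacle: the mutations must be arranged so that each of the four blocks ends up a genuine finite-type hereditary category, with no stray higher Ext-groups and, in particular, with the large Ext-groups produced by the codimension-three blow-up points tamed rather than turning a star into a multiple-arrow, affine, or wild quiver. This step also accounts for the numerology of the statement: since a trivalent vertex is the busiest one allowed in a Dynkin diagram, each of the four Beilinson bundles can anchor at most one $D_4$-block, while each level of blow-ups consumes two fresh anchors; hence two levels exhaust all four Beilinson bundles and a third level lies outside the reach of this method --- in contrast with $\P^2$, where each level consumes only one of the three available anchors, leaving room for the three levels treated in Proposition~\ref{prop: surfaces satisfying orlov conjecture}.
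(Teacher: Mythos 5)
Your proposal follows essentially the same route as the paper: run the blow-up machinery twice, consuming the anchors $\O(2),\O(3)$ at the first level and $\pi_1^*\O,\pi_1^*\O(1)$ at the second, so that $\Dbcoh(X_2)$ decomposes into four blocks of Rouquier dimension zero (each an exceptional object or a Dynkin star of type at most $D_4$), and conclude by Lemma~\ref{lem: rouquier dimension under glueing}. The mutation step you flag as the main obstacle is not one: after mutating the two remaining line bundles to the end of the decomposition of $\Dbcoh(X_1)$, exactly as in Proposition~\ref{prop: surfaces satisfying orlov conjecture}, Proposition~\ref{prop: apriori bounds for blow-ups} applies verbatim at the second level, because each strict-transform line maps isomorphically to a line in $\P^3$, so $\pi_1^*\O$ and $\pi_1^*\O(1)$ restrict to $\O_{\P^1},\O_{\P^1}(1)$, which is the full exceptional collection required by Lemma~\ref{lem: blowing up codimension two}, and the absence of stray higher $\Ext$'s is already built into those lemmas.
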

  \addtocounter{proposition}{-1}
}

\vspace{2mm}

The paper is organized as follows. In Section~\ref{sec: preliminaries} we introduce standard definitions related to semiorthogonal decompositions and Rouquier dimension. Section~\ref{sec: blowing up lemmas} describes some subcategories of Rouquier dimension zero which naturally arise inside the derived categories of certain blow-ups. In Section~\ref{sec: blow-ups of projective spaces} we prove the main results of this note, stated above. Finally, Appendix~\ref{sec: dual to orlov collection} contains a folklore result describing the full mutation of Orlov's semiorthogonal decomposition for a blow-up, which is needed in Section~\ref{sec: blowing up lemmas}.

\subsection*{Acknowledgements} The author would like to thank Alexander Kuznetsov for guidance and many helpful discussions.

\section{Preliminaries}
\label{sec: preliminaries}
All varieties and triangulated categories in this note are over a field $k$. All functors are assumed to be derived functors, with the exception of the global sections functor $\Gamma$. The (hyper)cohomology is always denoted by $\RGamma$.

Let $\mathcal{T}$ be a triangulated category. In this note we only consider $\Ext$-finite triangulated categories. For a pair of objects $A, B \in \mathcal{T}$ we denote by $\RHom_{\mathcal{T}}(A, B)$ the graded vector space $\oplus_i \Hom_{\mathcal{T}}(A, B[i])$.

A triangulated subcategory $\mA \subset \mathcal{T}$ is \emph{admissible} if its inclusion functor has both adjoints. A pair $\langle \mA, \mathcal{B} \rangle$ of triangulated subcategories of $\mathcal{T}$ forms a \emph{semiorthogonal decomposition} if both of them are admissible, they generate $\mathcal{T}$, and for any pair of objects $A \in \mA$, $B \in \mathcal{B}$ we have $\RHom_T(B, A) = 0$. In more general situations the admissibility condition is usually weakened, but for the categories occuring in our paper this definition is equivalent to the standard one.

For any semiorthogonal decomposition $T = \langle \mA, \mathcal{B} \rangle$ there exists another semiorthogonal decomposition $T = \langle \widetilde{\mathcal{B}}, \mA \rangle$ such that the category $\widetilde{\mathcal{B}}$ is equivalent to~$\mathcal{B}$ \cite{bondal-exceptional}. We refer to the subcategory $\widetilde{\mathcal{B}} \subset \mathcal{T}$ as the \emph{mutation of $\mathcal{B}$ through $\mathcal{A}$}. There is an analogous notion of a mutation of $\mA$ through $\mathcal{B}$.

A semiorthogonal decomposition into several components $\mathcal{T} = \langle \mA_1, \mA_2, \ldots, \mA_k \rangle$ is defined similarly. Given such a decomposition, its (right) \emph{dual semiorthogonal decomposition} is the one obtained by the following sequence of mutations. Mutate $\mA_k$ through $\langle \mA_1, \ldots, \mA_{k-1} \rangle$, then mutate $\mA_{k-1}$ through $\langle \mA_1, \ldots, \mA_{k-2} \rangle$, and so on:
\[
  \begin{aligned}
    \langle \mA_1, \sldots, \mA_{k-1}, \, \mA_k \rangle & \rightsquigarrow
    \langle \widetilde{\mA_k}, \mA_1, \sldots, \mA_{k-1} \rangle \rightsquigarrow
    \langle \widetilde{\mA_{k}}, \widetilde{\mA_{k-1}}, \mA_1, \sldots, \mA_{k-2} \rangle \rightsquigarrow \\
    & \rightsquigarrow \sldots \rightsquigarrow \langle \widetilde{\mA_k}, \widetilde{\mA_{k-1}}, \sldots, \widetilde{\mA_1} \rangle.
  \end{aligned}
\]
As in the case with a semiorthogonal decomposition into two components, for every $i$ the category $\widetilde{\mA_i}$ is equivalent to $\mA_i$.

To introduce Rouquier dimension, we need some notation.
% originating from \cite{bondal-vandenbergh}. Note that we shift the indexing by one, as in \cite{ballard-favero}.
For any object $E \in \mathcal{T}$ we denote by $\langle E \rangle_0 \subset \mathcal{T}$ the subcategory whose objects are direct summands of finite direct sums of shifts of $E$. The subcategories $\langle E \rangle_n$ are defined inductively. An object $F \in \mathcal{T}$ lies in the subcategory $\langle E \rangle_n$ if and only if there exists a distinguished triangle in $\mathcal{T}$:
\[
  A \to B \to C \to A[1]
\]
where $A \in \langle E \rangle_0$, $B \in \langle E \rangle_{n-1}$, and $C$ has a direct summand isomorphic to $F$. Note that the union $\cup_{n \geq 0} \langle E \rangle_n$ is the smallest triangulated subcategory of $\mathcal{T}$ which contains $E$ and is Karoubian closed.

\begin{definition}[\cite{rouquier}]
  Let $\mathcal{T}$ be a triangulated category. Its \emph{Rouquier dimension} $\rdim \mathcal{T}$ is the smallest number $n$ such that there exists an object $E \in \mathcal{T}$ with $\langle E \rangle_n = \mathcal{T}$. If there are no such objects for any $n$, we set $\rdim \mathcal{T} = \infty$.
\end{definition}

If $X$ is a smooth variety, we denote by $\rdim X$ the Rouquier dimension of $\Dbcoh(X)$.

\begin{lemma}[\cite{rouquier}]
  \label{lem: rouquier dimension under glueing}
  Let $\mathcal{T}$ be a triangulated category with a semiorthogonal decomposition
  \[
    \mathcal{T} = \langle \mA_1, \ldots, \mA_n \rangle.
  \]
  Then $\rdim \mathcal{T} \leq \sum_{i=1}^n \rdim(\mA_i) + n - 1$.
\end{lemma}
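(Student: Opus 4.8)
The plan is to reduce to the two‑component case by induction on $n$ and then exhibit an explicit generator. The case $n=1$ is trivial, so assume the bound holds for fewer than $n$ components. I would regroup the given decomposition as $\mathcal{T} = \langle \mathcal{A}_1, \mathcal{T}' \rangle$, where $\mathcal{T}' = \langle \mathcal{A}_2, \ldots, \mathcal{A}_n \rangle$ is an admissible subcategory of $\mathcal{T}$ carrying the evident semiorthogonal decomposition into $n-1$ pieces. Applying the inductive hypothesis to $\mathcal{T}'$ and the two‑component bound to $\mathcal{T} = \langle \mathcal{A}_1, \mathcal{T}'\rangle$ then yields $\rdim \mathcal{T} \le \rdim \mathcal{A}_1 + \rdim \mathcal{T}' + 1 \le \rdim \mathcal{A}_1 + \bigl(\sum_{i=2}^{n} \rdim \mathcal{A}_i + (n-2)\bigr) + 1 = \sum_{i=1}^{n} \rdim \mathcal{A}_i + (n-1)$.

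For the two‑component case, write $\mathcal{T} = \langle \mathcal{A}, \mathcal{B} \rangle$ with $a = \rdim \mathcal{A}$, $b = \rdim \mathcal{B}$, and choose $E_{\mathcal{A}} \in \mathcal{A}$, $E_{\mathcal{B}} \in \mathcal{B}$ with $\langle E_{\mathcal{A}} \rangle_a = \mathcal{A}$ and $\langle E_{\mathcal{B}} \rangle_b = \mathcal{B}$. Since $\mathcal{A}$ and $\mathcal{B}$ are full triangulated subcategories and the subcategories $\langle - \rangle_k$ are built using only triangles and direct summands, these equalities remain valid as inclusions $\mathcal{A} \subseteq \langle E_{\mathcal{A}} \rangle_a$ and $\mathcal{B} \subseteq \langle E_{\mathcal{B}} \rangle_b$ when the right‑hand sides are computed inside $\mathcal{T}$. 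I would then set $E = E_{\mathcal{A}} \oplus E_{\mathcal{B}}$; because $E_{\mathcal{A}}$ and $E_{\mathcal{B}}$ are direct summands of $E$, one has $\langle E_{\mathcal{A}} \rangle_k \subseteq \langle E \rangle_k$ and $\langle E_{\mathcal{B}} \rangle_k \subseteq \langle E \rangle_k$ for all $k$, hence $\mathcal{A} \subseteq \langle E \rangle_a$ and $\mathcal{B} \subseteq \langle E \rangle_b$. For any $T \in \mathcal{T}$ the semiorthogonal decomposition provides a distinguished triangle $\beta \to T \to \alpha \to \beta[1]$ with $\alpha \in \mathcal{A}$ and $\beta \in \mathcal{B}$, so $\alpha \in \langle E \rangle_a$ and $\beta \in \langle E \rangle_b$. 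If extensions behave subadditively, i.e. any $T$ sitting in such a triangle lies in $\langle E \rangle_{a+b+1}$, then $\langle E \rangle_{a+b+1} = \mathcal{T}$ and hence $\rdim \mathcal{T} \le a+b+1$, which is the two‑component assertion.

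The remaining point — and the one I expect to be the real obstacle — is the subadditivity claim: if $X' \to Z \to Y' \to X'[1]$ is a triangle with $X' \in \langle E \rangle_l$ and $Y' \in \langle E \rangle_m$, then $Z \in \langle E \rangle_{l+m+1}$. The standard way to see this is to first unwind the inductive definition and identify $\langle E \rangle_k$ with the closure under direct summands of the iterated extension product $\langle E \rangle_0 \star \cdots \star \langle E \rangle_0$ with $k+1$ factors (where, for classes $\mathcal{X}, \mathcal{Y}$ of objects, $\mathcal{X} \star \mathcal{Y}$ denotes the class of objects $Z$ admitting a triangle $X' \to Z \to Y' \to X'[1]$ with $X' \in \mathcal{X}$ and $Y' \in \mathcal{Y}$), and then to invoke the associativity of $\star$ — a consequence of the octahedral axiom — together with the compatibility of $\star$ with passage to summand‑closures. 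Putting these together, $\langle E \rangle_l \star \langle E \rangle_m$ is contained in the summand‑closure of an $(l+m+2)$‑fold extension product of copies of $\langle E \rangle_0$, which is exactly $\langle E \rangle_{l+m+1}$. Applying this to the triangle $\beta \to T \to \alpha \to \beta[1]$ with $l = b$ and $m = a$ closes the two‑component case, and combined with the induction it proves the lemma.
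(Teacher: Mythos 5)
Your proof is correct. The inductive reduction to two components is exactly what the paper does; the difference is that the paper then simply cites Rouquier's Lemma~3.5 for the two-component case, whereas you prove it: take $E = E_{\mA} \oplus E_{\mB}$, use the decomposition triangle $\beta \to T \to \alpha$ (which is the right orientation for the convention $\RHom(\mB,\mA)=0$), and conclude by subadditivity of generation time under extensions. That subadditivity argument -- identifying $\langle E \rangle_k$ with the summand-closure of a $(k+1)$-fold extension product of $\langle E \rangle_0$, and using associativity of $\star$ plus its compatibility with summand-closures -- is precisely the content of Rouquier's lemma (equivalently Bondal--Van den Bergh's Lemma~2.2.1, $\mathrm{smd}(\mathrm{smd}(\mathcal{X}) \star \mathrm{smd}(\mathcal{Y})) = \mathrm{smd}(\mathcal{X} \star \mathcal{Y})$). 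You correctly flag this as the real crux but leave it as a citation-level fact; that is fine, though note it is not a formality: the octahedral axiom gives associativity of $\star$, but the compatibility with summand-closure needs the separate ``summand of an extension'' trick, and if you wanted a fully self-contained proof that is the one step you would still have to write out. So in substance your route is the standard proof of the cited lemma, and it buys self-containedness at the cost of importing (or reproving) the smd-compatibility statement; the paper's version buys brevity by outsourcing exactly that point to \cite{rouquier}.
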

\begin{proof}
  It is enough to prove this for a decomposition into two components $\mathcal{T} = \langle \mA_1, \mA_2 \rangle$, which is \cite[Lem.~3.5]{rouquier}.
\end{proof}

Two simple lemmas below have been observed many times (e.g., \cite{rdim-zero-chen} or \cite[Sec.~3]{rdim-zero-elagin}). We include the proofs for completeness.

\begin{lemma}
  \label{lem: categories of rouquier dimension zero}
  Let $\mathcal{T}$ be an $\Ext$-finite idempotent-complete triangulated category. The following are equivalent:
  \begin{itemize}
  \item $\rdim \mathcal{T} = 0$;
  \item there are finitely many indecomposable objects in $\mathcal{T}$ up to isomorphisms and shifts.
  \end{itemize}
\end{lemma}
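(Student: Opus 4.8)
The plan is to derive both implications from the Krull--Schmidt property of $\mathcal{T}$. Since $\mathcal{T}$ is $\Ext$-finite, the endomorphism ring $\operatorname{End}_{\mathcal{T}}(A) = \Hom_{\mathcal{T}}(A,A)$ is a finite-dimensional $\k$-algebra for every object $A$; combined with idempotent-completeness this is a standard condition ensuring that $\mathcal{T}$ is Krull--Schmidt, i.e., every object is a finite direct sum of indecomposables, every indecomposable object has a local endomorphism ring, and such a decomposition is unique up to permutation and isomorphism of the summands. Recall also that $\langle E \rangle_0$, by definition, consists exactly of the direct summands of finite direct sums of shifts of $E$.

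First I would show that $\rdim \mathcal{T} = 0$ forces only finitely many indecomposables up to shift. By definition there is an object $E$ with $\langle E \rangle_0 = \mathcal{T}$; write $E \cong E_1 \oplus \dots \oplus E_m$ with each $E_i$ indecomposable. An arbitrary object $F$ is then a direct summand of a finite direct sum $\bigoplus_k E[n_k]$, and the latter is isomorphic to a finite direct sum of the objects $E_i[n_k]$. If $F$ is indecomposable, then completing $F$ to a decomposition of $\bigoplus_k E[n_k]$ into indecomposables and invoking uniqueness of Krull--Schmidt decompositions shows $F \cong E_i[n_k]$ for some $i$ and some $k$. Hence every indecomposable object of $\mathcal{T}$ is a shift of one of $E_1, \dots, E_m$, so there are at most $m$ of them up to isomorphism and shift.

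Conversely, suppose $E_1, \dots, E_m$ is a complete list of representatives of the isomorphism-and-shift classes of indecomposable objects, and set $E := E_1 \oplus \dots \oplus E_m$. For any $F \in \mathcal{T}$, write $F \cong \bigoplus_j G_j$ with each $G_j$ indecomposable; then $G_j \cong E_{i_j}[n_j]$ for suitable $i_j, n_j$, so $F \cong \bigoplus_j E_{i_j}[n_j]$. Each summand $E_{i_j}[n_j]$ is a direct summand of $E[n_j]$, so $F$ is a direct summand of $\bigoplus_j E[n_j]$, that is, $F \in \langle E \rangle_0$. Therefore $\langle E \rangle_0 = \mathcal{T}$ and $\rdim \mathcal{T} = 0$ (the trivial case $\mathcal{T} = 0$ being covered by taking $E = 0$).

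The only point requiring care is the appeal to Krull--Schmidt: one must check that $\operatorname{End}_{\mathcal{T}}(A)$ is finite-dimensional for every $A$ (immediate from $\Ext$-finiteness, since $\Hom_{\mathcal{T}}(A,A)$ is a summand of the finite-dimensional graded space $\RHom_{\mathcal{T}}(A,A)$) and that idempotent-completeness then guarantees existence and uniqueness of decompositions into indecomposables with local endomorphism rings. This is classical, and the rest of the argument is elementary bookkeeping with the definition of $\langle E \rangle_0$.
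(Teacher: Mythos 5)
Your proof is correct and follows essentially the same route as the paper's: both directions rest on the Krull--Schmidt property (which, as you note, follows from $\Ext$-finiteness and idempotent-completeness) together with the definition of $\langle E \rangle_0$, taking the direct sum of representatives of the indecomposables for one implication and decomposing a zero-time generator into indecomposable summands for the other. Your write-up is just slightly more explicit than the paper about why the Krull--Schmidt hypotheses hold.
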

\begin{proof}
  If there are finitely many isomorphism classes of indecomposable objects up to shifts, let $M_1, \ldots, M_n$ be the list of representatives of those isomorphism classes. Then the direct sum $\oplus_{i=1}^n M_i$ is a generator of $\mathcal{T}$ with generating time zero.

  For the converse implication, let $G \in \mathcal{T}$ be a generator with generating time zero. There is a decomposition of $G$ into finitely many indecomposable direct summands $G_1, \ldots, G_n$. By assumption any object of $\mathcal{T} = \langle G \rangle_0$ is a direct summand of $G \otimes V_\bullet$ for some finite-dimensional graded vector space $V_\bullet$. By the Krull--Schmidt theorem any such direct summand is isomorphic to a direct sum of shifts of $G_i$'s. Therefore any indecomposable object of $\mathcal{T}$ is isomorphic to one of the $G_i$'s.
\end{proof}

If a triangulated category is generated by a single exceptional object, then it is equivalent to the derived category of vector spaces and clearly satisfies the assumptions of the lemma above. The key observation for Proposition~\ref{prop: apriori bounds for blow-ups} is that Lemma~\ref{lem: categories of rouquier dimension zero} applies to some more complicated categories as well.

\begin{lemma}
  \label{lem: dynkin quivers have rouquier dimension zero}
  Let $Q$ be a quiver whose underlying undirected graph is of ADE type. Then the bounded derived category of finite-dimensional representations $D^b(Q\mathrm{-rep})$ has Rouquier dimension zero.
\end{lemma}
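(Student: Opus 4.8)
The plan is to invoke Lemma~\ref{lem: categories of rouquier dimension zero}, so it suffices to show that $D^b(Q\text{-rep})$ has only finitely many indecomposable objects up to isomorphism and shift. The category $D^b(Q\text{-rep})$ is $\Ext$-finite (the path algebra is finite-dimensional) and idempotent-complete (it is the bounded derived category of a category of finite length), so the hypotheses of that lemma are met.

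First I would recall Gabriel's theorem: since the underlying graph of $Q$ is of ADE type, the path algebra $kQ$ is representation-finite, i.e., there are only finitely many isomorphism classes of indecomposable finite-dimensional representations; in fact they are in bijection with the positive roots of the corresponding root system. Next I would use the fact that $kQ$ is hereditary (it has global dimension $\le 1$, since $Q$ has no oriented cycles). For a hereditary abelian category $\mathcal{A}$, every object of $D^b(\mathcal{A})$ is isomorphic to the direct sum of shifts of its cohomology objects, i.e., $F \simeq \bigoplus_i H^i(F)[-i]$; this is the standard splitting argument, using that $\Ext^{\ge 2}$ vanishes so all the connecting maps in the Postnikov/truncation tower can be killed. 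Consequently every indecomposable object of $D^b(Q\text{-rep})$ is a shift $M[j]$ of an indecomposable representation $M$. Combining the two facts, the indecomposables of $D^b(Q\text{-rep})$ up to shift are exactly the finitely many indecomposable $kQ$-representations, so there are finitely many up to isomorphism and shift.

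Applying Lemma~\ref{lem: categories of rouquier dimension zero} in the direction "finitely many indecomposables $\Rightarrow \rdim = 0$" then gives $\rdim D^b(Q\text{-rep}) = 0$, as claimed. (One checks $\rdim \ge 0$ trivially, or simply notes the category is nonzero.)

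The only genuinely non-formal inputs are Gabriel's theorem and the splitting of objects in the derived category of a hereditary category; both are classical, so there is no real obstacle here — the main point is just to assemble these two statements and feed the conclusion into the preceding lemma. If one wanted to avoid quoting Gabriel's theorem in full strength, an alternative is to note that representation-finiteness for ADE quivers can be seen directly from the Weyl group / reflection functor combinatorics, but quoting it is cleanest.
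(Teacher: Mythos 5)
Your proof is correct and follows essentially the same route as the paper: hereditariness of the path algebra to split every complex into shifts of representations, Gabriel's theorem for finiteness of indecomposables, and then Lemma~\ref{lem: categories of rouquier dimension zero}. The only difference is that you spell out the verification of the hypotheses of that lemma ($\Ext$-finiteness and idempotent-completeness), which the paper leaves implicit.
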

\begin{proof}
  The path algebra of any quiver without relations has homological dimension one (see, for example, \cite[Prop.~1.4.1]{brion-quivers}). Therefore any object of $D^b(Q\mathrm{-rep})$ is quasiisomorphic to a direct sum of shifts of representations. In particular the only indecomposable objects are shifts of representations. Moreover, by Gabriel's theorem $Q$ has finitely many indecomposable representations up to isomorphism. So by Lemma~\ref{lem: categories of rouquier dimension zero} the Rouquier dimension of this category is zero.
\end{proof}

\section{Blowing up points and codimension two subspaces}
\label{sec: blowing up lemmas}

The existence of a semiorthogonal decomposition in some category leads to an upper bound on Rouquier dimension as in Lemma~\ref{lem: rouquier dimension under glueing}. The next proposition shows that sometimes this upper bound is preserved under blow-ups of points or codimension two subvarieties. When the upper bound is sharp, e.g., for $X \iso \P^2$, the blow-up also satisfies Orlov's conjecture.

\begin{proposition}
  \label{prop: apriori bounds for blow-ups}
  Let $X$ be a smooth $n$-dimensional variety. Assume that there is a semiorthogonal decomposition with $n-1$ exceptional line bundles $L_0, \ldots, L_{n-2}$:
  \[
    \Dbcoh(X) = \langle \mA, L_0, \ldots, L_{n-2} \rangle.
  \]
  Let $B$ be a set of cardinatly at most three, and let $\{ Z_b \}_{b \in B}$ be a set of subvarieties of $X$ such that any $Z_b$ is
  \begin{itemize}
  \item either a point; or
  \item a smooth codimension-2 subvariety such that the restrictions $\langle L_0|_{Z_b}, \ldots, L_{n-2}|_{Z_b} \rangle$ form a full exceptional collection in $\Dbcoh(Z_b)$.
  \end{itemize}
  Let $\pi\colon Y \to X$ be the blow-up morphism in the union of $\{ Z_b \}_{b \in B}$. Then there exists a semiorthogonal decomposition
  \[
    \Dbcoh(Y) = \langle \pi^*\mA, \mathcal{T}_0, \sldots, \mathcal{T}_{n-2} \rangle
  \]
  such that every subcategory $\mathcal{T}_i$ has Rouquier dimension zero.
\end{proposition}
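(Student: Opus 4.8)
The strategy is to reduce immediately to the case of a single center $Z_b$ and then combine the three resulting decompositions, since the centers are disjoint and blowing them up can be done one at a time (the strict transforms of the later centers are isomorphic to the originals, and the exceptional divisors over distinct centers don't interact homologically). So I would first treat $Y_0 = \BL_{Z_0} X$. The input data is Orlov's blow-up formula: $\Dbcoh(Y_0) = \langle \Dbcoh(Z_0)_{-c+1}, \ldots, \Dbcoh(Z_0)_{-1}, \pi^*\Dbcoh(X)\rangle$, where $c = \codim Z_b$ and each $\Dbcoh(Z_0)_j$ is the image of $\Dbcoh(Z_0)$ under $i_{E*}(-\otimes \O(jE))\circ p^*$, with $p\colon E\to Z_0$ the projectivized normal bundle. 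If $Z_0$ is a point this contributes a single extra exceptional object ($c-1 = n-1$... wait, no: $c = n$, so $c - 1 = n - 1$ blow-up components, each $\Dbcoh(\mathrm{pt})$, hence $n-1$ exceptional objects — that's already too many if handled naively). If $Z_0$ has codimension two it contributes one copy of $\Dbcoh(Z_0)$.

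The real content is the mutation. Starting from $\Dbcoh(Y_0) = \langle \Dbcoh(Z_0)_{\bullet}, \pi^*\mA, \pi^* L_0, \ldots, \pi^* L_{n-2}\rangle$ (using the given decomposition of $\Dbcoh(X)$ pulled back), I want to mutate each blow-up component of the form $\Dbcoh(Z_0)_j$ to the right, past $\pi^*\mA$, until it sits among the $\pi^* L_i$'s, and then absorb it together with the adjacent $\pi^* L_i$'s into a single piece $\mathcal{T}_i$ that I can recognize as the derived category of a $D_4$-quiver (or something smaller — a point, or $A_2$, or $A_3$-quiver). For this to work I need two things: (i) each blow-up component, after mutation past $\pi^*\mA$, becomes orthogonal to $\pi^*\mA$ on the correct side — this is automatic from the semiorthogonal structure, but I must check that mutating past $\pi^*\mA$ doesn't change the component as an abstract category (true, mutations are equivalences) and, crucially, that the result still lies in $\langle \pi^* L_0,\ldots,\pi^* L_{n-2}\rangle$ — this is where the hypothesis on the restrictions $L_i|_{Z_b}$ enters, guaranteeing that the component "fits" compatibly with the exceptional line bundles; (ii) for a block consisting of $\pi^* L_{i-1}, \pi^* L_i$ together with an inserted copy of $\Dbcoh(\mathrm{pt})$ or $\Dbcoh(Z_0)$ with the right $\Ext$'s between them, the glued category is the bounded derived category of representations of an ADE quiver. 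In the point case the local model is: two exceptional objects plus one more, with a single nonzero $\Hom$ of dimension matching the number of centers — giving $A_3$, or $D_4$ when all three points coincide in a block. For this recognition step I would identify the heart of a bounded $t$-structure spanned by these objects (or their shifts) and compute the $\Ext^1$-quiver, using that $\Ext^{\geq 2}$ vanishes because everything lives on a category of global dimension $\leq 1$ after the identification.

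I would organize this as: (1) reduce to one center; (2) write down Orlov's decomposition and rewrite it using the hypothesis on $X$; (3) perform the mutations of the blow-up components past $\pi^*\mA$, invoking the Appendix on the dual of Orlov's decomposition to control what these mutated objects are (this is presumably exactly why that appendix is there); (4) regroup the $2 + (n-1)$ remaining pieces after the two preserved $\pi^*L_0, \pi^*L_1$ into $n-1$ blocks; wait — re-examining, the statement keeps $\pi^*\mA$ and produces $\mathcal{T}_0,\ldots,\mathcal{T}_{n-2}$, i.e., $n-1$ blocks, from the $n-1$ line bundles $L_0,\ldots,L_{n-2}$ plus the inserted blow-up components; (5) recognize each $\mathcal{T}_i$ via its $\Ext$-quiver as $D^b(Q\text{-rep})$ for $Q$ of type $A_1, A_2, A_3,$ or $D_4$, hence of Rouquier dimension zero by Lemma~\ref{lem: dynkin quivers have rouquier dimension zero}.

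The main obstacle will be step (3)–(4): controlling precisely which $\pi^* L_i$'s a given mutated blow-up component interacts with, and verifying the resulting $\Ext$-quiver between the (shifted) generators is ADE — in particular that no $\Ext^{\geq 2}$ survives and that the valence of each vertex stays $\leq 3$, which is exactly the reason the hypothesis restricts $B$ to at most three elements. The computation of these $\Ext$-groups reduces, via adjunction and the projection formula along $\pi$ and $i_E$, to cohomology of line bundles on $Z_b$ and on the projectivized normal bundle $E$; the hypothesis that $L_i|_{Z_b}$ restrict to a full exceptional collection is what makes these reduce to $D^b(Z_b) = \langle\text{pt}\rangle$ or to $D^b(\text{pt})$, keeping everything finite and ADE-shaped.
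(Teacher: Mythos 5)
Your plan is essentially the paper's proof: both pass through Orlov's blow-up decomposition and its (right) dual --- the point of Appendix~A, as you guessed --- to obtain, for each center, exceptional objects $(S_b)_i$ supported on the exceptional divisor with $\RHom_Y(L_i,(S_b)_j)=\delta_{ij}\,k[0]$, then group each $L_i$ with the $|B|\leq 3$ objects $\{(S_b)_i\}_{b\in B}$ into a block equivalent to $D^b$ of a star quiver of type $A_1$, $A_2$, $A_3=D_3$ or $D_4$, and finally interleave the blocks by mutations before applying the Dynkin/Rouquier-dimension-zero lemma. The only differences are organizational: the paper keeps the blow-up components to the right of the line bundles and mutates each assembled block rightward past the remaining $S$'s (rather than mutating components leftward past $\pi^*\mA$), and it replaces your slightly circular ``identify a heart and use global dimension $\leq 1$'' recognition step by a direct appeal to Bondal's theorem for exceptional collections whose morphisms are concentrated in degree $0$.
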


\begin{remark}
  Lemma~\ref{lem: rouquier dimension under glueing} applied to the constructed semiorthogonal decomposition gives us a bound $\rdim(Y) \leq \rdim(\pi^*\mA) + n-1$. Since $\pi^*$ is fully faithful, this is exactly the same bound that we get for the Rouquier dimension of $X$ from the semiorthogonal decomposition of $\Dbcoh(X)$.
\end{remark}

We start by proving two lemmas, one about blow-ups of points and the other about blow-ups of codimension two subvarieties. Abusing the notation a little, we use the same letter to refer to a line bundle on the base and to its pullback to the blown up variety.

\begin{lemma}
  \label{lem: blowing up points}
  Let $X$ be a smooth $n$-dimensional variety. Assume that there is a semiorthogonal decomposition which includes $n-1$ exceptional line bundles $L_0, \ldots, L_{n-2}$:
  \[
    \Dbcoh(X) = \langle \mA, L_0, \ldots, L_{n-2} \rangle.
  \]
  Let $\pi\colon Y \to X$ be the blow-up of a point $x \in X$. Then there is a semiorthogonal decomposition
  \[
    \Dbcoh(Y) = \langle \pi^*\mA, \,\, L_0, \sldots, L_{n-2}, \, S_{n-2}, \sldots, S_0 \rangle
  \]
  such that for any $i$ the object $S_i$ is exceptional, it is supported set-theoretically on the exceptional divisor, and it satisfies $\RHom_Y(L_{i}, S_i) = k[0]$.
\end{lemma}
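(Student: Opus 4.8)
The plan is to combine Orlov's blow-up formula with a mutation performed inside the exceptional divisor. Write $E=\pi^{-1}(x)\cong\P^{n-1}$ for the exceptional divisor and $\iota\colon E\hookrightarrow Y$ for its inclusion, so that $\O_E(E)\cong\O_{\P^{n-1}}(-1)$. Orlov's theorem, in the ``fully mutated'' form where the pullback occupies the leftmost block (this is exactly the computation of Appendix~\ref{sec: dual to orlov collection}), gives a semiorthogonal decomposition $\Dbcoh(Y)=\langle\pi^*\Dbcoh(X),\iota_*\O_E,\iota_*\O_E(1),\sldots,\iota_*\O_E(n-2)\rangle$, and since $\pi^*$ is fully faithful I may refine the first block to
\[
  \Dbcoh(Y)=\langle\pi^*\mA,\,L_0,\sldots,L_{n-2},\,\iota_*\O_E,\iota_*\O_E(1),\sldots,\iota_*\O_E(n-2)\rangle .
\]
Let $\mathcal S=\langle\iota_*\O_E,\sldots,\iota_*\O_E(n-2)\rangle={}^\perp(\pi^*\Dbcoh(X))$ denote the last block; every object of $\mathcal S$ has cohomology sheaves supported set-theoretically on $E$.

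The first point is that the condition $\RHom_Y(L_i,S_i)=k[0]$ does not really depend on $i$. Since $L_i$ is pulled back from $X$, its restriction to every infinitesimal neighbourhood of $E$ is trivial, so for any object $M$ with cohomology supported on $E$ the projection formula gives $M\otimes L_i^{-1}\cong M$ and hence $\RHom_Y(L_i,M)\cong\RGamma(Y,M)$. Thus, once the $S_i$ are produced inside $\mathcal S$, it remains only to arrange $\RGamma(Y,S_i)=k[0]$.

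Next I would obtain the $S_i$ by mutation. Extend the exceptional collection $e_0:=\iota_*\O_E,\ e_1:=\iota_*\O_E(1),\ \sldots,\ e_{n-2}:=\iota_*\O_E(n-2)$ of $\mathcal S$ to a helix $\{e_k\}_{k\in\Z}$ and set $S_j:=e_{-j}$ for $0\le j\le n-2$. Then $\langle e_{-(n-2)},\sldots,e_{-1},e_0\rangle$ is again a full exceptional collection of $\mathcal S$, so replacing the last block of the decomposition above by it produces a semiorthogonal decomposition $\Dbcoh(Y)=\langle\pi^*\mA,L_0,\sldots,L_{n-2},S_{n-2},\sldots,S_0\rangle$, obtained from Orlov's one by mutations carried out entirely inside $\mathcal S$. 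Being members of a helix, the $S_j$ are exceptional, and lying in $\mathcal S$ they are supported set-theoretically on $E$; so only $\RGamma(Y,S_j)=k[0]$ remains.

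This cohomological computation is the heart of the matter, and it is here that the leftward-shifted collection is the right choice: for $j=0$ the statement is the trivial $\RGamma(\P^{n-1},\O)=k[0]$, whereas $\RGamma(Y,e_j)=\RGamma(\P^{n-1},\O(j))$ is large for $j\ge1$, forcing one to pass to negative indices. I would unwind the triangles defining the left mutations $e_{-1},e_{-2},\sldots$; the only inputs needed are the splitting $\iota^*\iota_*\O_E(a)\cong\O_E(a)\oplus\O_E(a+1)[1]$ (valid because $\O_E(E)\cong\O(-1)$ and the obstructing $\Ext^2$ on $\P^{n-1}$ vanishes), which yields $\RHom_Y(\iota_*\O_E(a),\iota_*\O_E(b))\cong\RGamma(\P^{n-1},\O(b-a))\oplus\RGamma(\P^{n-1},\O(b-a-1))[-1]$ together with the analogous description of the graded spaces governing each mutation. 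Applying $\RGamma(Y,-)$ to the mutation triangles and using $\RGamma(\P^{n-1},\O(\ell))=0$ for $-n<\ell<0$, the classes $\RGamma(Y,e_{-j})$ collapse to $k[0]$ for all $0\le j\le n-2$. The main obstacle is precisely this bookkeeping --- controlling the successive mutated objects $e_{-j}$ well enough to see that their total cohomology is one-dimensional. Conceptually, $\RGamma(Y,-)|_{\mathcal S}$ is corepresented by the object $\lambda_{\mathcal S}(\O_Y)\in\mathcal S$, where $\lambda_{\mathcal S}$ is the left adjoint of the inclusion $\mathcal S\hookrightarrow\Dbcoh(Y)$, and the content of the computation is that this object pairs as $k[0]$ with every term of the chosen exceptional collection of $\mathcal S$.
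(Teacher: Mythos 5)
Your reduction of the required identity to $\RGamma(Y,S_i)\cong k[0]$ is fine, and the skeleton (start from the decomposition $\langle\pi^*\mA,L_0,\sldots,L_{n-2},\iota_*\O_E,\sldots,\iota_*\O_E(n-2)\rangle$ and replace the last block $\mathcal{S}$ by a mutated exceptional collection) is the same as the paper's. The gap is in your choice of that collection: the helix-shifted window $(e_{-(n-2)},\sldots,e_{-1},e_0)$ is \emph{not} the right dual collection of $(e_0,\sldots,e_{n-2})$, and for $n\geq 4$ its objects fail the key cohomological property, so the asserted ``collapse to $k[0]$'' is false rather than merely unproved. Indeed, for $j\geq 1$ the helix object $e_{-j}$ is, up to shift, the Serre functor of $\mathcal{S}$ applied to $e_{n-1-j}$ (mutation through the whole complementary window is the Serre functor up to shift), and projecting $e_{n-1-j}\otimes\omega_Y[n]$ into $\mathcal{S}$ along $\pi^*\Dbcoh(X)$ one computes $\dim\RGamma(Y,e_{-j})=\binom{n+j-2}{n-1}$, which equals $1$ only for $j\leq 1$. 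A low-tech check for $n=4$: here $e_{-2}=L_{e_{-1}}L_{e_0}(e_1)$, and the mutation triangles give $[e_{-2}]=3[e_2]-8[e_1]+6[e_0]$ in $K_0$, hence $\chi(\O_Y,e_{-2})=3\cdot 10-8\cdot 4+6\cdot 1=4$; so $\RGamma(Y,e_{-2})$ cannot be $k$ placed in a single degree, and by your own (correct) reduction $\RHom_Y(L_2,S_2)\neq k[0]$. Concretely, the bookkeeping you defer breaks at the second step: the triangle defining $e_{-2}$ involves $\RHom_Y(e_{-1},L_{e_0}e_1)$, which is not controlled by the vanishing of $\RGamma(\P^{n-1},\O(\ell))$ for $-n<\ell<0$.

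The correct choice is the right dual exceptional collection: mutate each $e_j$ only through $\langle e_0,\sldots,e_{j-1}\rangle$, not through the entire preceding helix window. That is exactly what the paper does (Appendix A): the dual objects are identified with the truncations $\tau_{\geq-j}(\pi^*\O_x)$, whose nonzero cohomology sheaves $\iota_*\Omega^m_{\P^{n-1}}(m)$ with $m\geq 1$ are acyclic, so $\RGamma(Y,\tau_{\geq-j}\pi^*\O_x)=k[0]$ on the nose. Your recipe and the dual collection agree in the slots $j=0,1$, hence whenever $n\leq 3$, which is why the first mutation and the low-dimensional cases look fine; they diverge from $j=2$ onward, and the lemma is needed for all $n$ in Theorem 4.1.
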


\begin{proof}
  Consider the dual to Orlov's decomposition of a blow-up (Proposition~\ref{prop: dual to orlov collection}):
  \[
    \Dbcoh(Y) = \langle \pi^*\mA, \,\, L_0, \sldots, L_{n-2}, \,\, \tau_{\geq -(n-2)}(\pi^*\O_x), \sldots, \tau_{\geq 0}(\pi^*\O_x) \rangle,
  \]
  where $\tau$ denotes the canonical truncation of a complex.
  We only need to check the identity $\RHom_Y(L_i, \tau_{\geq -i}(\pi^*\O_x)) = k[0]$ to finish the proof of the lemma. From Lemma~\ref{lem: cohomology sheaves for blow-ups} we know that the cohomology sheaves of $\pi^*\O_x$ are isomorphic to the pushforwards $j_*(\Omega^m(m))$ from the exceptional divisor $j\colon E \hookrightarrow Y$. If $L$ is any line bundle on $X$, then its pullback to $Y$ restricts trivially to $E$. By adjunction we have
  \[
    \RHom_Y(\pi^*L, j_*\Omega^m(m)) \caniso \RGamma(\P^{n-1}, \Omega^m(m)) =
    \begin{cases}
      0, & m \neq 0 \\
      k[0], & m = 0.
    \end{cases}
  \]
  Thus in particular $\RHom_Y(L_i, \tau_{\geq i} \pi^*\O_x) \caniso k[0]$, as claimed in the lemma.
\end{proof}

\begin{lemma}
  \label{lem: blowing up codimension two}
  Let $X$ be a smooth $n$-dimensional variety. Assume that there is a semiorthogonal decomposition which includes $n-1$ exceptional objects $L_0, \ldots, L_{n-2}$:
  \[
    \Dbcoh(X) = \langle \mA, L_0, \ldots, L_{n-2} \rangle.
  \]
  Let $Z \subset X$ be a smooth codimension-2 subvariety such that the restrictions $\langle L_0|_Z, \ldots, L_{n-2}|_Z \rangle$ form a full exceptional collection in $\Dbcoh(Z)$. Then there is a semiorthogonal decomposition for the blow-up $\pi\colon Y \to X$ of $X$ along the subvariety $Z \subset X$:
  \[
    \Dbcoh(Y) = \langle \pi^*\mA, \,\, L_0, \, \ldots \, , L_{n-2}, \, S_{n-2}, \, \ldots \, , S_0 \rangle
  \]
  such that for any $i$ the object $S_i$ is exceptional, it is supported set-theoretically on the exceptional divisor, and it satisfies $\RHom_Y(L_{i}, S_i) = k[0]$.
\end{lemma}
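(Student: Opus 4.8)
The plan is to adapt the proof of Lemma~\ref{lem: blowing up points}, taking advantage of the fact that for a codimension-two blow-up the ``new'' part of the (dual) Orlov decomposition is a single copy of $\Dbcoh(Z)$ rather than a chain of truncations. First I would record the geometric input special to codimension two: the exceptional divisor $j\colon E\monoarrow Y$ is a $\P^{1}$-bundle $p\colon E\to Z$, and its normal bundle restricts to $\O(-1)$ on every fibre of $p$, so that $p_{*}\mathcal{N}_{E/Y}=0$ and $p_{*}\O_{E}=\O_{Z}$. From these vanishings one gets, by the usual adjunction-and-projection-formula manipulations, that $j_{*}p^{*}\colon\Dbcoh(Z)\to\Dbcoh(Y)$ is fully faithful and that $\RHom_{Y}\bigl(j_{*}p^{*}(-),\pi^{*}(-)\bigr)=0$. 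The codimension-two case of Proposition~\ref{prop: dual to orlov collection} — in which the unique new component needs no further mutation, so that one simply recovers Orlov's blow-up theorem here — then gives
\[
  \Dbcoh(Y)=\langle\,\pi^{*}\Dbcoh(X),\ j_{*}p^{*}\Dbcoh(Z)\,\rangle,
\]
and expanding the first component by the given decomposition $\Dbcoh(X)=\langle\mA,L_{0},\ldots,L_{n-2}\rangle$ turns it into $\Dbcoh(Y)=\langle\pi^{*}\mA,\ L_{0},\ldots,L_{n-2},\ j_{*}p^{*}\Dbcoh(Z)\rangle$.

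Next I would choose the collection inside the last component so as to force the matching with the $L_{i}$. Inside $\Dbcoh(Z)=\langle L_{0}|_{Z},\ldots,L_{n-2}|_{Z}\rangle$ let $\langle R_{n-2},\ldots,R_{0}\rangle$ be the right dual exceptional collection, characterised by $\RHom_{Z}(L_{i}|_{Z},R_{j})=\k[0]$ for $i=j$ and $0$ otherwise. It is again a full exceptional collection (in this order), and since $j_{*}p^{*}$ is fully faithful its image $\langle j_{*}p^{*}R_{n-2},\ldots,j_{*}p^{*}R_{0}\rangle$ is a full exceptional collection generating $j_{*}p^{*}\Dbcoh(Z)$. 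Putting $S_{i}:=j_{*}p^{*}R_{i}$ therefore yields exactly
\[
  \Dbcoh(Y)=\langle\,\pi^{*}\mA,\ L_{0},\ldots,L_{n-2},\ S_{n-2},\ldots,S_{0}\,\rangle.
\]
(A line-bundle twist pulled back from $Z$ may appear in Orlov's functor; it is irrelevant, because the image of $j_{*}p^{*}(-\otimes\mathcal{M})$ equals that of $j_{*}p^{*}$, so the last component is the same subcategory regardless and we are free to pick the collection in it.)

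Finally I would verify the three asserted properties of $S_{i}$. Exceptionality is immediate from full faithfulness of $j_{*}p^{*}$ and exceptionality of $R_{i}$; set-theoretic support on $E$ holds because $S_{i}=j_{*}p^{*}R_{i}$ is pushed forward from $E$. For the remaining identity, the commutativity of the square formed by $\pi$, $j$, $p$ and the inclusion of $Z$ gives $j^{*}\pi^{*}\iso p^{*}\bigl((-)|_{Z}\bigr)$, so by the $(j^{*},j_{*})$-adjunction and full faithfulness of $p^{*}$ (which holds since $p_{*}\O_{E}=\O_{Z}$),
\[
  \RHom_{Y}(L_{i},S_{i})=\RHom_{Y}\bigl(\pi^{*}L_{i},\,j_{*}p^{*}R_{i}\bigr)=\RHom_{E}\bigl(p^{*}(L_{i}|_{Z}),\,p^{*}R_{i}\bigr)=\RHom_{Z}(L_{i}|_{Z},R_{i})=\k[0].
\]
The same computation with $i\neq j$ gives $\RHom_{Y}(L_{j},S_{i})=0$, while $\RHom_{Y}(S_{i},\pi^{*}\mA)=\RHom_{Y}(S_{i},L_{j})=0$ — which is what makes the displayed list a genuine semiorthogonal decomposition — again follows from $p_{*}\mathcal{N}_{E/Y}=0$; so everything is consistent and the proof would be complete.

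The point that actually requires thought — and the reason one cannot simply imitate Orlov's \emph{original} collection — is that the functor realising the new component must have relative degree zero over $Z$, i.e.\ be an honest pushforward of a pullback (up to a twist from the base). Only then does $\RHom_{Y}(\pi^{*}L_{i},-)$ collapse, via $p_{*}\O_{E}=\O_{Z}$, to an $\RHom$ computed on $Z$, where the dual collection produces a one-dimensional answer. Orlov's naive twisted-pushforward functors $j_{*}\bigl(p^{*}(-)\otimes\O_{E}(\pm1)\bigr)$ have nonzero relative degree, and then $\RHom_{Y}(\pi^{*}L_{i},-)$ of their objects involves $p_{*}$ of a line bundle of fibre degree $\pm1$, which is either $0$ or of rank $2$ — never one-dimensional. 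This is exactly what makes Proposition~\ref{prop: dual to orlov collection} (in its codimension-two incarnation) the essential input; with it in hand, the rest is bookkeeping.
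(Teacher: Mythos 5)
Your proposal is correct and follows essentially the same route as the paper: invoke Orlov's blow-up theorem in the form $\Dbcoh(Y)=\langle \pi^*\Dbcoh(X),\, j_*p^*\Dbcoh(Z)\rangle$, replace the last component by the image under $j_*p^*$ of the right dual exceptional collection to $\langle L_0|_Z,\ldots,L_{n-2}|_Z\rangle$, and compute $\RHom_Y(L_i,S_i)\caniso\RHom_Z(L_i|_Z,\widetilde{L_i})\caniso k[0]$ by adjunction, the projection formula and $p_*\O_E=\O_Z$. The only difference is cosmetic: you re-derive the full faithfulness and semiorthogonality of the $j_*p^*$ component from $p_*\mathcal{N}_{E/Y}=0$, whereas the paper simply cites Orlov for this form of the decomposition.
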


\begin{proof}
  Let $j\colon E \to Y$ be the embedding of the exceptional divisor, and let $p\colon E \to Z$ be the projection map. By Orlov's theorem \cite{orlov93} the functor $j_*p^*(-)\colon \Dbcoh(Z) \to \Dbcoh(Y)$ is a fully faithful embedding and there is a semiorthogonal decomposition:
  \[
    \Dbcoh(Y) = \langle \, \pi^*\mA, L_0, \ldots, L_{n-2}, \, j_*p^*(\Dbcoh(Z)) \, \rangle.
  \]
  
  By assumptions the category $\Dbcoh(Z)$ has a full exceptional collection $\langle L_0|_Z, \ldots, L_{n-2}|_Z \rangle$. Let $\langle \widetilde{L_{n-2}}, \ldots, \widetilde{L_{0}} \rangle$ be the (right) dual full exceptional collection in $\Dbcoh(Z)$. Consider the semiorthogonal decomposition
  \[
    \Dbcoh(Y) = \langle \, \pi^*\mA, L_0, \ldots, L_{n-2}, \,\, j_*p^*\widetilde{L_{n-2}}, \ldots, j_*p^*\widetilde{L_0} \, \rangle.
  \]
  Every object $j_*p^*\widetilde{L_i}$ is exceptional since the functor $j_*p^*$ is fully faithful. The restriction of a line bundle $L_i$ to $E \subset Y$ is isomorphic to $p^*(L_i|_Z)$, so by adjunction we have
  \[
    \RHom_Y(L_i, j_*p^*\widetilde{L_i}) \caniso \RHom_Z(L_i|_Z, \widetilde{L_i}) \caniso k[0]
  \]
  where the last isomorphism is a standard property of the dual exceptional collection (see, for example, \cite[Prop.~2.15]{kapranov88}). Thus all the conditions of the lemma are satisfied.
\end{proof}

Now we are ready to prove Proposition~\ref{prop: apriori bounds for blow-ups}.

\begin{proof}[Proof of Proposition~\ref{prop: apriori bounds for blow-ups}]
  For every subvariety $Z_b \subset X$ in the center of the blow-up choose exceptional objects $(S_b)_{n-2}, \ldots, (S_b)_0 \in \Dbcoh(Y)$ using Lemmas~\ref{lem: blowing up points} and \ref{lem: blowing up codimension two} respectively. The exceptional objects corresponding to distinct subvarieties are completely orthogonal since they are supported on disjoint exceptional divisors. Thus there is a semiorthogonal decomposition
  \[
    \Dbcoh(Y) = \langle \, \pi^*\mA, \, L_0, \sldots, L_{n-2}, \,\, \{ (S_b)_{n-2} \}_{b \in B}, \sldots, \{ (S_b)_0 \}_{b \in B} \, \rangle.
  \]
  Consider the subcategory $\mathcal{T} = \langle L_{n-2}, \{ (S_b)_{n-2} \}_{b \in B} \rangle \subset \Dbcoh(Y)$. It is generated by $1 + |B|$ exceptional objects. From Lemmas~\ref{lem: blowing up points} and \ref{lem: blowing up codimension two} we know that the only nontrivial morphism spaces in $\mathcal{T}$ are
  \[
    \RHom_Y(L_{n-2}, (S_b)_{n-2}) \iso k[0]
  \]
  for any $b \in B$. According to \cite[Th.~6.2]{bondal-exceptional} the category $\mathcal{T}$ is equivalent to the bounded derived category of finite-dimensional representations of the quiver
  
  \[
    \begin{tikzcd}
      & & \bullet \arrow[ld] \arrow[lld] \arrow[d] \arrow[rd] \arrow[rrd] & & \\
      \bullet & \bullet & \ldots & \bullet & \bullet
    \end{tikzcd}
  \]
  with one source and $|B|$ target vertices. It is assumed that $|B| \leq 3$, so this is a quiver of Dynkin type: either $A_1$, $A_2$, $D_3$, or $D_4$. By Lemma~\ref{lem: dynkin quivers have rouquier dimension zero} in all those cases we have $\rdim \mathcal{T} = 0$, as claimed.

  Let $\mathcal{T}_0$ be the mutation of $\mathcal{T}$ through the subcategory $\langle \{ (S_b)_{n-3} \}_{b \in b}, \, \ldots\, , \{ (S_b)_{0} \}_{b \in B} \rangle$, i.e., there is a mutation of semiorthogonal decompositions of $\Dbcoh(Y)$:
  \[
    \begin{aligned}
      \langle \mA, \, L_0, \, \ldots \, , L_{n-3}, \, & \highlight{\mathcal{T}}, \, \{ (S_b)_{n-3} \}_{b \in b}, \, \ldots\, , \{ (S_b)_{0} \}_{b \in B} \rangle
      \rightsquigarrow \\
      & \rightsquigarrow
      \langle \mA, \, L_0, \, \ldots \, , L_{n-3}, \{ (S_b)_{n-3} \}_{b \in b}, \, \ldots\, , \{ (S_b)_{0} \}_{b \in B}, \highlight{\mathcal{T}_0} \rangle.
    \end{aligned}
  \]
  The category $\mathcal{T}_0$ is equivalent to $\mathcal{T}$, in particular it has Rouquier dimension zero. Now we can consider the subcategory $\langle L_{n-3}, \{ (S_b)_{n-3} \}_{b \in B} \rangle$, which will also be equivalent to the category of representations of the same quiver, and repeat the argument. Repeating this $n-2$ times finishes the proof of the proposition.
\end{proof}

\section{Blow-ups of projective spaces}
\label{sec: blow-ups of projective spaces}
Proposition~\ref{prop: apriori bounds for blow-ups} applies to varieties with many exceptional line bundles. Here we collect the implications for projective spaces.

\begin{theorem}
  \label{thm: blowing up projective spaces}
  Let $\{ Z_b \}_{b \in B}$ be a set of at most three disjoint linear subspaces of $\P^n$  such that every subspace $Z_b$ is either a point or has codimension two. Denote by $Y$ the blow-up of the projective space in the union $\sqcup_{b \in B} Z_b$. Then $\rdim Y = n$, i.e., $Y$ satisfies Orlov's conjecture.
\end{theorem}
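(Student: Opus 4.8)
The plan is to deduce the theorem directly from Proposition~\ref{prop: apriori bounds for blow-ups}; the only work is to exhibit the correct semiorthogonal decomposition on $\P^n$ and to verify the hypotheses of that proposition for the given centers. Since $Y$ is a blow-up of a smooth variety along a smooth disjoint center, it is again smooth of dimension $n$, and Rouquier's general lower bound gives $\rdim Y \geq \dim Y = n$. It therefore suffices to establish the inequality $\rdim Y \leq n$.

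For this, start from the Beilinson exceptional collection
\[
  \Dbcoh(\P^n) = \langle \O_{\P^n}, \O_{\P^n}(1), \O_{\P^n}(2), \sldots, \O_{\P^n}(n) \rangle
\]
and regroup it as $\langle \mA, L_0, \sldots, L_{n-2} \rangle$ with $\mA = \langle \O_{\P^n}, \O_{\P^n}(1) \rangle$ and $L_i = \O_{\P^n}(i+2)$ for $0 \leq i \leq n-2$, so that $L_0, \sldots, L_{n-2}$ are $n-1$ exceptional line bundles. To apply Proposition~\ref{prop: apriori bounds for blow-ups} one must check that for every center $Z_b$ of codimension two the restrictions $L_0|_{Z_b}, \sldots, L_{n-2}|_{Z_b}$ form a full exceptional collection on $Z_b$. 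But each such $Z_b$ is a \emph{linear} subspace, hence $Z_b \cong \P^{n-2}$ with $\O_{\P^n}(d)|_{Z_b} \cong \O_{\P^{n-2}}(d)$; the restricted collection is then $\O_{\P^{n-2}}(2), \sldots, \O_{\P^{n-2}}(n)$, a block of $n-1 = \dim \P^{n-2} + 1$ consecutive twists of the structure sheaf, which is a full exceptional collection on $\P^{n-2}$ by Beilinson. Centers that are points impose no condition, and the centers are disjoint with $|B| \leq 3$ by hypothesis, so Proposition~\ref{prop: apriori bounds for blow-ups} applies and yields a semiorthogonal decomposition $\Dbcoh(Y) = \langle \pi^*\mA, \mathcal{T}_0, \sldots, \mathcal{T}_{n-2} \rangle$ with $\rdim \mathcal{T}_i = 0$ for every $i$.

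It remains to bound $\rdim \pi^*\mA$. The functor $\pi^*$ is fully faithful, as $\pi_*\O_Y = \O_X$, so $\pi^*\mA \iso \mA = \langle \O_{\P^n}, \O_{\P^n}(1) \rangle$, and by \cite[Th.~6.2]{bondal-exceptional} this category is the bounded derived category of finite-dimensional representations of the Kronecker quiver with $\dim_k \Hom_{\P^n}(\O_{\P^n}, \O_{\P^n}(1)) = n+1$ arrows. Its path algebra is hereditary, so, as in the proof of Lemma~\ref{lem: dynkin quivers have rouquier dimension zero}, every object is a finite direct sum of shifts of representations, and each representation $M$ sits in a distinguished triangle $Q \to P_0 \to M \to Q[1]$ with $Q, P_0$ projective; hence every object lies in $\langle P \rangle_1$ for $P$ the sum of the indecomposable projectives, i.e.\ $\rdim \mA \leq 1$. (When $n \geq 2$ there are infinitely many indecomposable representations, so in fact $\rdim \mA = 1$ by Lemma~\ref{lem: categories of rouquier dimension zero}; only the inequality is needed here, and for $n=1$ the statement is trivial, since the admissible centers are points and blowing up points of a curve changes nothing.) Applying Lemma~\ref{lem: rouquier dimension under glueing} to the decomposition of $\Dbcoh(Y)$ above now gives
\[
  \rdim Y \leq \rdim \pi^*\mA + \sum_{i=0}^{n-2} \rdim \mathcal{T}_i + (n-1) \leq 1 + 0 + (n-1) = n,
\]
and combined with the lower bound we conclude $\rdim Y = n$.

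The conceptual content is entirely in Proposition~\ref{prop: apriori bounds for blow-ups}; the one point in this deduction requiring care is the restriction condition, namely that the twists $\O_{\P^n}(2), \sldots, \O_{\P^n}(n)$ restrict to a \emph{full} — not merely exceptional — collection on each codimension-two linear center. This is exactly what pins down the number $n-1$ of line bundles one may peel off, and hence why the Beilinson collection must be split with precisely two twists kept inside $\mA$ (which is also what keeps $\rdim \mA$ at $1$ rather than larger).
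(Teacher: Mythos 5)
Your proof is correct and follows essentially the same route as the paper: apply Proposition~\ref{prop: apriori bounds for blow-ups} to the Beilinson collection with $\mA = \langle \O, \O(1)\rangle$, verify that consecutive twists restrict to a full exceptional collection on a linear $\P^{n-2}$, and conclude via Lemma~\ref{lem: rouquier dimension under glueing}. The only (harmless) difference is bookkeeping at the end: the paper keeps $\O$ and $\O(1)$ as two separate dimension-zero components of an $(n+1)$-term decomposition, whereas you bound $\rdim \mA \leq 1$ via the hereditary Kronecker algebra and use an $n$-term decomposition — both yield $\rdim Y \leq n$.
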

\begin{proof}
  The projective space has a standard full exceptional collection \cite{beil78}:
  \[
    \Dbcoh(\P^n) = \langle \O, \O(1), \ldots, \O(n) \rangle.
  \]
  The restriction of $n-1$ last exceptional line bundles to any codimension-2 linear subspace is a full exceptional collection, so by Proposition~\ref{prop: apriori bounds for blow-ups} there is a semiorthogonal decomposition
  \[
    \Dbcoh(Y) = \langle \O, \O(1), \mathcal{T}_0, \ldots, \mathcal{T}_{n-2} \rangle
  \]
  where each component is a subcategory of Rouquier dimension zero. From Lemma~\ref{lem: rouquier dimension under glueing} we get the inequality $\rdim Y \leq n$. Since $Y$ is a smooth $n$-dimensional variety, $\rdim Y = n$.
\end{proof}

When $n$ is small, i.e., for $n=2$ and $n =3$, there are enough exceptional line bundles to repeat the procedure more than once.

\begin{proposition}
  \label{prop: surfaces satisfying orlov conjecture}
  Consider a tower of maps
  \[
    X_3 \to X_2 \to X_1 \to X_0 = \P^2
  \]
  where each map $\pi_i\colon X_i \to X_{i-1}$ is a blow-up in at most three distinct points. Then $X_3$ satisfies Orlov's conjecture, i.e., $\rdim X_3 = 2$.
\end{proposition}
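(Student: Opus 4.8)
The plan is to apply Proposition~\ref{prop: apriori bounds for blow-ups} once at each of the three stages of the tower, consuming one of the three line bundles of the Beilinson collection $\Dbcoh(\P^2) = \langle \O, \O(1), \O(2) \rangle$ each time.

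At the bottom I would take the decomposition $\Dbcoh(\P^2) = \langle \langle \O, \O(1) \rangle, \O(2) \rangle$ and apply Proposition~\ref{prop: apriori bounds for blow-ups} to $\pi_1$ (here $n = 2$ and all blow-up centers are points, so the proposition applies with no condition on the centers), obtaining $\Dbcoh(X_1) = \langle \O, \O(1), \mathcal{T}^{(1)} \rangle$ with $\rdim \mathcal{T}^{(1)} = 0$ and with $\O, \O(1)$ now denoting pullbacks from $\P^2$. To feed this into the next step I would mutate $\mathcal{T}^{(1)}$ through $\langle \O(1) \rangle$, which rewrites the decomposition as $\langle \O, (\mathcal{T}^{(1)})', \O(1) \rangle$ without altering $\O$ or $\O(1)$; these are still exceptional line bundles on $X_1$ (exceptional because $R\pi_{1*}\O_{X_1} \caniso \O_{\P^2}$), and $(\mathcal{T}^{(1)})' \iso \mathcal{T}^{(1)}$ still has Rouquier dimension zero. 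Proposition~\ref{prop: apriori bounds for blow-ups} applied to $\pi_2$ with $\mA = \langle \O, (\mathcal{T}^{(1)})' \rangle$, $L_0 = \O(1)$ then gives $\Dbcoh(X_2) = \langle \O, (\mathcal{T}^{(1)})', \mathcal{T}^{(2)} \rangle$, using that $\pi_2^*$ is fully faithful and carries semiorthogonal decompositions of $\mA$ to semiorthogonal decompositions of $\pi_2^*\mA$, so that $\pi_2^*(\mathcal{T}^{(1)})'$ again has Rouquier dimension zero. For the last step I would mutate the subcategory $\langle (\mathcal{T}^{(1)})', \mathcal{T}^{(2)} \rangle$ through $\langle \O \rangle$, rewriting the decomposition as $\langle (\mathcal{T}^{(1)})'', (\mathcal{T}^{(2)})', \O \rangle$ with $\O$ itself untouched, and apply Proposition~\ref{prop: apriori bounds for blow-ups} to $\pi_3$ with $\mA = \langle (\mathcal{T}^{(1)})'', (\mathcal{T}^{(2)})' \rangle$ and $L_0 = \O$. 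The outcome is a semiorthogonal decomposition
\[
  \Dbcoh(X_3) = \langle\, \pi_3^*(\mathcal{T}^{(1)})'', \; \pi_3^*(\mathcal{T}^{(2)})', \; \mathcal{T}^{(3)} \,\rangle
\]
into three subcategories of Rouquier dimension zero, so Lemma~\ref{lem: rouquier dimension under glueing} gives $\rdim X_3 \leq 0 + 0 + 0 + 3 - 1 = 2$; combined with Rouquier's lower bound $\rdim X_3 \geq \dim X_3 = 2$ this forces $\rdim X_3 = 2$.

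The only point that needs care is the rearrangement between successive blow-ups. Proposition~\ref{prop: apriori bounds for blow-ups}, through Lemma~\ref{lem: blowing up points}, requires the distinguished objects $L_i$ to be genuine line bundles (its proof uses that their pullbacks restrict trivially to the new exceptional divisors), so at no stage may I apply a mutation to $\O$, $\O(1)$, or $\O(2)$ themselves: I only ever mutate the Rouquier-dimension-zero components, sliding them leftwards past the surviving line bundles. Since $\Dbcoh(\P^2)$ supplies exactly three line bundles and each level of the tower uses up one of them, the procedure works for a tower of height exactly three and no further. If some $\pi_i$ blows up fewer than three points the same argument applies verbatim, with Proposition~\ref{prop: apriori bounds for blow-ups} producing Rouquier-dimension-zero pieces modeled on a smaller ADE quiver (or on nothing at all, if that $\pi_i$ is an isomorphism).
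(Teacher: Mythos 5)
Your proposal is correct and follows essentially the same route as the paper: apply Proposition~\ref{prop: apriori bounds for blow-ups} once per level of the tower, use mutations to slide the Rouquier-dimension-zero components out of the way while keeping an exceptional line bundle (pulled back from $\P^2$) at the right end for the next blow-up, and conclude with Lemma~\ref{lem: rouquier dimension under glueing} plus Rouquier's lower bound. The only difference is cosmetic bookkeeping (the paper mutates $\mathcal{T}_1$ past both remaining line bundles at once, you slide pieces one line bundle at a time), which does not affect the argument.
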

\begin{proof}
  The derived category of $\P^2$ has a standard full exceptional collection: 
  \[
    \Dbcoh(\P^2) = \langle \O_{\P^2}, \O_{\P^2}(1), \O_{\P^2}(2) \rangle.
  \]
  By Proposition~\ref{prop: apriori bounds for blow-ups} after the first blow-up map $X_1 \to \P^2$ there is a semiorthogonal decomposition
  \[
    \Dbcoh(X_1) = \langle \pi_1^*\O_{\P^2}, \pi_1^*\O_{\P^2}(1), \mathcal{T}_1 \rangle
  \]
  such that $\rdim(\mathcal{T}_1) = 0$. A mutation of $\mathcal{T}_1$ through the two exceptional line bundles produces another decomposition:
  \[
    \Dbcoh(X_1) = \langle \widetilde{\mathcal{T}_1}, \pi_1^*\O_{\P^2}, \pi_1^*\O_{\P^2}(1) \rangle.
  \]
  Now we may apply Theorem~\ref{thm: blowing up projective spaces} to the second blow-up map $\pi_2\colon X_2 \to X_1$ choosing $\pi_1^*\O(1)$ as an exceptional line bundle. Then we repeat the same argument once again for the third blow-up morphism. The result is a semiorthogonal decomposition for $\Dbcoh(X_3)$ consisting of three subcategories of Rouquier dimension zero. By Lemma~\ref{lem: rouquier dimension under glueing} this establishes the upper bound $\rdim(X_3) \leq 2$. Since $X_3$ is a surface, this upper bound is sharp, $\rdim(X_3) = 2$.  
\end{proof}

\begin{corollary}
  Let $Y \to \P^2$ be a blow-up of up to nine arbitrary distinct points. Then $Y$ satisfies Orlov's conjecture, i.e., $\rdim Y = 2$.
\end{corollary}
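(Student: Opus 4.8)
The plan is to realize $Y$ as the top of a tower of the shape considered in Proposition~\ref{prop: surfaces satisfying orlov conjecture} and then simply invoke that proposition.

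First I would fix distinct points $x_1, \ldots, x_m \in \P^2$ with $m \leq 9$ and partition them into at most three groups $W_1, W_2, W_3$, each of cardinality at most three (if $m < 9$ some of these groups may be smaller or empty, which causes no trouble). Then I would build the tower one step at a time: let $\pi_1\colon X_1 \to \P^2$ be the blow-up in $W_1$. Since the points of $W_2 \cup W_3$ are disjoint from $W_1$, the morphism $\pi_1$ is an isomorphism over a neighbourhood of each of them, so they lift to distinct points of $X_1$; let $\pi_2\colon X_2 \to X_1$ be the blow-up in the preimages of $W_2$, and likewise $\pi_3\colon X_3 \to X_2$ the blow-up in the preimages of $W_3$.

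The one elementary fact needed here is that blowing up a disjoint union of smooth centres can be carried out one centre at a time, in any order, yielding the same variety as blowing up the union at once. Consequently the composite $\pi_1 \circ \pi_2 \circ \pi_3\colon X_3 \to \P^2$ is isomorphic over $\P^2$ to the blow-up of $\P^2$ in $x_1, \ldots, x_m$, that is, to $Y$. Finally, the tower $X_3 \to X_2 \to X_1 \to X_0 = \P^2$ satisfies the hypothesis of Proposition~\ref{prop: surfaces satisfying orlov conjecture}, each step being a blow-up in at most three distinct points, so $\rdim(X_3) = 2$, hence $\rdim Y = 2$.

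I do not expect any genuine obstacle: the only point requiring justification is the standard claim that an iterated blow-up of disjoint points equals the blow-up of their union and that the lifted points remain distinct, which holds because the intermediate morphisms are isomorphisms away from the exceptional divisors.
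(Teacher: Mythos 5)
Your proof is correct and is exactly the paper's argument: the paper likewise deduces the corollary by realizing $Y$ as a tower of at most three blow-ups of at most three points each and invoking Proposition~\ref{prop: surfaces satisfying orlov conjecture}. You have merely spelled out the (standard) fact that blowing up disjoint points can be done in stages, which the paper leaves implicit.
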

\begin{proof}
  The variety $Y$ can be obtained via a tower of blow-ups as in Proposition~\ref{prop: surfaces satisfying orlov conjecture}.
\end{proof}

Note that this includes all del Pezzo surfaces. Orlov's conjecture for del Pezzo surfaces is due to Ballard and Favero \cite[Cor.~3.27]{ballard-favero}. They proved it by constructing tilting vector bundles with special properties. For del Pezzo surfaces our argument is shorter and does not require the points to be in general position, but Ballard and Favero also established the conjecture for some other surfaces to which our argument does not seem to apply, e.g., for Hirzebruch surfaces.

Interestingly, Ballard and Favero remark in \cite[Prop.~3.10]{ballard-favero} that after blowing up at least 11 distinct points on a plane tilting vector bundles with the desired properties do not exist, and the argument in Proposition~\ref{prop: surfaces satisfying orlov conjecture} does not apply to a blow-up of at least 10 points.

\begin{proposition}
  \label{prop: threefolds satisfying orlov conjecture}
  Consider a tower of maps
  \[
    X_2 \to X_1 \to X_0 = \P^3
  \]
  where each map $\pi_i\colon X_i \to X_{i-1}$ is a blow-up of a disjoint union of points and lines, at most three per level, where by a line we mean a strict transform of an one-dimensional linear subspace in $\P^3$. Then $X_2$ satisfies Orlov's conjecture, i.e., $\rdim(X_2) = 3$.
\end{proposition}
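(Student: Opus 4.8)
The plan is to iterate Proposition~\ref{prop: apriori bounds for blow-ups} twice, exactly as in the proof of Proposition~\ref{prop: surfaces satisfying orlov conjecture}, starting from the standard full exceptional collection $\Dbcoh(\P^3) = \langle \O, \O(1), \O(2), \O(3) \rangle$. Since $n = 3$, the proposition wants a semiorthogonal decomposition ending in $n - 1 = 2$ exceptional line bundles, so I would read this as $\Dbcoh(\P^3) = \langle \mA, \O(2), \O(3) \rangle$ with $\mA = \langle \O, \O(1) \rangle$. The restrictions $\langle \O(2)|_{\ell}, \O(3)|_{\ell} \rangle \cong \langle \O_{\P^1}(2), \O_{\P^1}(3) \rangle$ form a full exceptional collection on every line $\ell \subset \P^3$, so the hypotheses of Proposition~\ref{prop: apriori bounds for blow-ups} hold for the first blow-up $\pi_1$ (whose codimension-two centers are lines), and it yields a semiorthogonal decomposition
\[
  \Dbcoh(X_1) = \langle \pi_1^*\O, \pi_1^*\O(1), \mathcal{T}_0, \mathcal{T}_1 \rangle
\]
with $\rdim \mathcal{T}_0 = \rdim \mathcal{T}_1 = 0$ on the smooth threefold $X_1$.

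Next I would mutate the components $\mathcal{T}_0$ and $\mathcal{T}_1$ past the two line bundles to the front, producing a semiorthogonal decomposition $\Dbcoh(X_1) = \langle \widetilde{\mathcal{T}_1}, \widetilde{\mathcal{T}_0}, \pi_1^*\O, \pi_1^*\O(1) \rangle$ in which the Rouquier-dimension-zero pieces come first and the exceptional line bundles $\pi_1^*\O, \pi_1^*\O(1)$ sit at the end, unchanged. To apply Proposition~\ref{prop: apriori bounds for blow-ups} again, now to $\pi_2\colon X_2 \to X_1$ with $L_0 = \pi_1^*\O$ and $L_1 = \pi_1^*\O(1)$, the only extra verification compared with the surface case is that for every codimension-two center $Z \subset X_1$ of $\pi_2$ the restrictions $\langle \pi_1^*\O|_Z, \pi_1^*\O(1)|_Z \rangle$ form a full exceptional collection in $\Dbcoh(Z)$. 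By hypothesis such a $Z$ is the strict transform of a line $\ell \subset \P^3$ that is not one of the centers of $\pi_1$, so $\ell$ meets the center of $\pi_1$ in at most finitely many points, and the strict transform of $\ell$ is the blow-up of the smooth curve $\ell$ along a zero-dimensional subscheme, hence an isomorphism compatible with the inclusions into $\P^3$. Therefore $Z \cong \P^1$ and $\pi_1^*\O|_Z \cong \O_{\P^1}$, $\pi_1^*\O(1)|_Z \cong \O_{\P^1}(1)$, so the pair is the standard full exceptional collection on $\P^1$; for centers that are points there is nothing to check.

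Applying Proposition~\ref{prop: apriori bounds for blow-ups} to $\pi_2$ with $\mA' = \langle \widetilde{\mathcal{T}_1}, \widetilde{\mathcal{T}_0} \rangle$ then gives a semiorthogonal decomposition
\[
  \Dbcoh(X_2) = \langle \pi_2^*\widetilde{\mathcal{T}_1}, \pi_2^*\widetilde{\mathcal{T}_0}, \mathcal{S}_0, \mathcal{S}_1 \rangle
\]
with $\rdim \mathcal{S}_0 = \rdim \mathcal{S}_1 = 0$, where $\pi_2^*\mA'$ decomposes as $\langle \pi_2^*\widetilde{\mathcal{T}_1}, \pi_2^*\widetilde{\mathcal{T}_0} \rangle$ because $\pi_2^*$ is fully faithful and each $\pi_2^*\widetilde{\mathcal{T}_i}$ is triangle-equivalent to $\mathcal{T}_i$, hence also of Rouquier dimension zero. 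Lemma~\ref{lem: rouquier dimension under glueing} applied to this decomposition into four components of Rouquier dimension zero gives $\rdim X_2 \le 0 + 4 - 1 = 3$, while $\rdim X_2 \ge \dim X_2 = 3$ by Rouquier's general lower bound for smooth varieties, so $\rdim X_2 = 3$. I expect the main obstacle to be the geometric claim in the middle paragraph, namely that a codimension-two center of $\pi_2$, being the strict transform of a one-dimensional linear subspace, is still abstractly $\P^1$ with $\pi_1^*\O(1)$ restricting to $\O_{\P^1}(1)$; this is precisely what the hypothesis on the shape of the centers is there to guarantee, and it is the only place where the threefold argument needs anything the surface argument did not.
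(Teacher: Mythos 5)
Your proposal is correct and follows essentially the same route as the paper: iterate Proposition~\ref{prop: apriori bounds for blow-ups} twice starting from $\langle \O, \O(1), \O(2), \O(3) \rangle$, mutate the Rouquier-dimension-zero pieces to the front, and apply Lemma~\ref{lem: rouquier dimension under glueing} to the resulting four-component decomposition together with Rouquier's lower bound. Your explicit check that a strict transform of a line is still $\P^1$ with $\pi_1^*\O(1)$ restricting to $\O_{\P^1}(1)$ is exactly the point the paper's terse proof gestures at when it says the assumption on the lines suffices to apply Lemma~\ref{lem: blowing up codimension two} at the second level.
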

\begin{proof}
  For threefolds, applying Proposition~\ref{prop: apriori bounds for blow-ups} to a blow-up requires two exceptional line bundles, and the full exceptional collection on $\P^3$ has four. The assumption on the lines blown up by $\pi_2\colon X_2 \to X_1$ is sufficient to apply Lemma~\ref{lem: blowing up codimension two} for the second level of the blow-up tower. Thus we may use the same argument as in Proposition~\ref{prop: surfaces satisfying orlov conjecture}.
\end{proof}

It is possible that a similar technique may be applied to some other cases, such as a blow-up of $\P^4$ in a line, or even a disjoint union of three lines. However, since we were not able to use ADE quivers larger than $D_4$ in the proof of Proposition~\ref{prop: apriori bounds for blow-ups}, the method cannot be applied when the codimension is large enough, even potentially. For example, blowing up $\P^5 \subset \P^{14}$ will add 48 new exceptional objects to a category with 15 exceptional line bundles, and they cannot be grouped together into less than 16 copies of the $D_4$-quiver, which means that the bound from Lemma~\ref{lem: rouquier dimension under glueing} is not going to be sharp.

\appendix
\section{Dual to Orlov's collection}
\label{sec: dual to orlov collection}
Orlov \cite{orlov93} has constructed a semiorthogonal decomposition for a blow-up of a variety in a smooth center. In this appendix we calculate a certain mutation of that decomposition. For simplicity of notation we only consider a blow-up of a smooth point, but similar methods work in general. This result is well-known to experts, and we include the proof due to the lack of a convenient reference.

\begin{lemma}
  \label{lem: cohomology sheaves for blow-ups}
  Let $X$ be a smooth variety, $n := \dim X$. Consider the blow-up $\pi\colon Y \to X$ in a point $x \in X$, and let $j\colon \P^{n-1} \hookrightarrow Y$ be the inclusion of the exceptional divisor. For any $k \in \Z$ there is an isomorphism $\mathcal{H}^{-k}(\pi^*\O_x) \iso j_*(\Omega_{\P^{n-1}}^k(k))$.
\end{lemma}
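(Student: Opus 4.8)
The plan is to reduce the statement to an explicit computation with a Koszul complex. Since $\pi^*\O_x$ is supported on the exceptional divisor $E = \pi^{-1}(x) \cong \P^{n-1}$ and the assertion is local on $X$ near $x$, I would first shrink $X$ to an affine neighbourhood $V \ni x$ carrying a regular system of parameters $t_1, \ldots, t_n$ at $x$; after further shrinking we may assume the $t_i$ generate the ideal sheaf of $x$ on all of $V$ and form a regular sequence (the latter because $V$ is Cohen--Macaulay of dimension $n$ and the $t_i$ cut out a point). Then $K_\bullet(t_1,\ldots,t_n)$ is a locally free resolution of $\O_x$ on $V$, so on $\pi^{-1}(V)$ the object $\pi^*\O_x$ is computed by the pulled-back Koszul complex $K_\bullet(\pi^*t_1, \ldots, \pi^*t_n)$, whose $p$-th term is $\Lambda^p\O_Y^n$ in cohomological degree $-p$.

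The structural fact I would use is that the morphism $\psi\colon \O_Y^n \to \O_Y$ with components $\pi^*t_i$ factors through the inclusion $\O_Y(-E) = \mathcal{I}_E \hookrightarrow \O_Y$, and that the induced surjection $\O_Y^n \twoheadrightarrow \O_Y(-E)$ has locally free kernel $\mathcal{K}$ of rank $n-1$; this is precisely the statement that the $\pi^*t_i$ generate the invertible ideal $\mathcal{I}_E$. From the short exact sequence $0 \to \mathcal{K} \to \O_Y^n \to \O_Y(-E) \to 0$ and its exterior powers $0 \to \Lambda^k\mathcal{K} \to \Lambda^k\O_Y^n \to \Lambda^{k-1}\mathcal{K}\otimes\O_Y(-E) \to 0$ (here one uses that $\O_Y(-E)$ is a line bundle), I would check that the Koszul differential identifies the subsheaf of cycles in degree $-k$ with $\Lambda^k\mathcal{K} \subseteq \Lambda^k\O_Y^n$ and the subsheaf of boundaries with $\mathcal{I}_E\cdot\Lambda^k\mathcal{K}$, whence $\mathcal{H}^{-k}(\pi^*\O_x) \cong \Lambda^k\mathcal{K}/\mathcal{I}_E\Lambda^k\mathcal{K} \cong j_*\bigl(\Lambda^k(\mathcal{K}|_E)\bigr)$. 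Both identifications can be read off in a single blow-up chart: there, a linear change of basis turns $K_\bullet(\pi^*t_1,\ldots,\pi^*t_n)$ into $K_\bullet(t_1)\otimes\Lambda^\bullet(\O^{n-1})$ with zero differential on the second factor and $t_1$ a nonzerodivisor, which makes the cycles and boundaries transparent; since kernels, images and ideal products of sheaf maps are local, this suffices.

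It remains to identify $\Lambda^k(\mathcal{K}|_E)$ with $\Omega^k_{\P^{n-1}}(k)$. Restricting the sequence $0 \to \mathcal{K} \to \O_Y^n \to \O_Y(-E) \to 0$ to $E$ (all terms being locally free, the restriction stays exact) gives $0 \to \mathcal{K}|_E \to \O_E^n \to \O_Y(-E)|_E \to 0$; one has $\O_Y(-E)|_E \cong \O_{\P^{n-1}}(1)$, the conormal bundle of the exceptional divisor, and a chart computation shows the middle arrow is the Euler evaluation map $\O^n \to \O(1)$. Hence $\mathcal{K}|_E$ is its kernel, namely $\Omega^1_{\P^{n-1}}(1)$, and therefore $\Lambda^k(\mathcal{K}|_E) = \Lambda^k\Omega^1_{\P^{n-1}}\otimes\O(k) = \Omega^k_{\P^{n-1}}(k)$. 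Finally, since both $\mathcal{H}^{-k}(\pi^*\O_x)$ and $j_*\Omega^k_{\P^{n-1}}(k)$ are supported on $E \subset \pi^{-1}(V)$, the isomorphism produced over $\pi^{-1}(V)$ extends to all of $Y$.

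The only real subtlety is bookkeeping the twist by $\O_Y(-E)$: one must verify that the Koszul boundary in degree $-k$ has image exactly $\mathcal{I}_E\cdot\Lambda^k\mathcal{K}$, so that the $\O_Y(-E)$-factors appearing along the way cancel precisely and leave $\Lambda^k(\mathcal{K}|_E) = \Omega^k(k)$ rather than, say, $\Omega^k$ or $\Omega^k(2k)$. The split form $K_\bullet(t_1)\otimes\Lambda^\bullet(\O^{n-1})$ of the pulled-back Koszul complex in a chart is what pins this down. As a consistency check, for $k=0$ this recovers $\mathcal{H}^0(\pi^*\O_x) = \O_E$, and for $k<0$ or $k\geq n$ both sides vanish, the latter because $\mathcal{K}$ has rank $n-1$.
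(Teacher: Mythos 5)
Your argument is correct and follows essentially the same route as the paper: both restrict to a neighbourhood where $x$ is cut out by a regular sequence, pull back the Koszul resolution to $Y$, and identify the cohomology sheaves of the resulting Koszul complex via the excess conormal bundle $\mathcal{K}|_E \cong \Omega^1_{\P^{n-1}}(1)$. The only difference is that you carry out the cycle/boundary computation explicitly in a blow-up chart, whereas the paper delegates exactly this step to a citation.
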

\begin{proof}
  We may restrict to a neighborhood of $x \in X$ in which the point is the zero locus of a regular section of some vector bundle. Then the derived pullback $\pi^*\O_{x}$ may be computed using a Koszul resolution for the skyscraper sheaf $\O_{x}$. The pullback of that resolution to $Y$ is still a Koszul complex, and there exists a description of its cohomology sheaves in terms of the excess conormal bundle to $\P^{n-1} \subset Y$, see, e.g., \cite[Prop.~1.28]{sanna}. This description completes the proof.
\end{proof}

\begin{proposition}
  \label{prop: dual to orlov collection}
  Let $X$ be a smooth variety of dimension $n$. Let $\pi\colon Y \to X$ be the blow-up in a point $x \in X$. Denote by $S$ the derived pullback $S := \pi^*(\O_x)$ of the skyscraper sheaf at $x$. Then there exists a semiorthogonal decomposition:
  \[
    \begin{aligned}
      \Dbcoh(Y) \caniso \langle \, \pi^*\Dbcoh(X), \, \, \tau_{\geq -(n-2)}S, \, \ldots \, ,
      \,\, \tau_{\geq -1}S, \, \tau_{\geq 0}S \, \rangle.
    \end{aligned}
  \]
  Here for any $k \in [0, n-2]$ the canonical truncation $\tau_{\geq -k}S$ is an exceptional object.
\end{proposition}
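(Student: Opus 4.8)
The plan is to read the desired decomposition off Orlov's original blow-up decomposition \cite{orlov93} and to recognise its second half as the truncations of $S$. Write $E = \P^{n-1}$ for the exceptional divisor and $j\colon E \hookrightarrow Y$ for its inclusion. Orlov's theorem gives
\[
  \Dbcoh(Y) = \langle \, j_*\O_E(1-n),\, j_*\O_E(2-n),\, \sldots,\, j_*\O_E(-1),\, \pi^*\Dbcoh(X) \, \rangle,
\]
with $\pi^*\Dbcoh(X)$ on the right because, by adjunction and Bott's formula, $\RHom_Y(\pi^*G, j_*\O_E(-i)) \caniso (G|_x)^\dual \otimes \RGamma(\P^{n-1}, \O(-i)) = 0$ for $1 \leq i \leq n-1$. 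Since $\pi^*\Dbcoh(X)$ is admissible, the same category also carries the decomposition $\langle \pi^*\Dbcoh(X),\, {}^\perp(\pi^*\Dbcoh(X)) \rangle$, so it is enough to show that $\langle \tau_{\geq -(n-2)}S, \sldots, \tau_{\geq -1}S, \tau_{\geq 0}S \rangle$ is a full exceptional collection inside the left orthogonal ${}^\perp(\pi^*\Dbcoh(X))$.

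Three points remain. \emph{(i) Membership.} For $0 \leq k \leq n-2$ one needs $\RHom_Y(\tau_{\geq -k}S, \pi^*G) = 0$. By Lemma~\ref{lem: cohomology sheaves for blow-ups} the object $\tau_{\geq -k}S$ is assembled by canonical truncation triangles out of the sheaves $j_*\Omega^m_{\P^{n-1}}(m)$ with $0 \leq m \leq k$, so by dévissage it suffices that $\RHom_Y(j_*\Omega^m_{\P^{n-1}}(m), \pi^*G) = 0$ for $0 \leq m \leq n-2$; by adjunction this group is $(G|_x) \otimes \RGamma\big(\P^{n-1},\, (\Omega^m_{\P^{n-1}}(m))^\dual(-1)\big)[-1]$, and $(\Omega^m_{\P^{n-1}}(m))^\dual(-1) \caniso \bigwedge^m T_{\P^{n-1}}(-m-1) \caniso \Omega^{n-1-m}_{\P^{n-1}}(n-1-m)$, which is acyclic for $m \leq n-2$ by Bott's formula. \emph{(ii) Generation.} The same truncation triangles, together with $\tau_{\geq -(n-1)}S = S = \pi^*\O_x \in \pi^*\Dbcoh(X)$, show that every $j_*\Omega^m_{\P^{n-1}}(m)$ with $0 \leq m \leq n-1$ lies in $\langle \pi^*\Dbcoh(X), \tau_{\geq -(n-2)}S, \sldots, \tau_{\geq 0}S \rangle$; since the $\Omega^m_{\P^{n-1}}(m)$ form a full exceptional collection on $\P^{n-1}$ \cite{beil78}, their push-forwards generate $j_*\Dbcoh(E)$, which together with $\pi^*\Dbcoh(X)$ generates $\Dbcoh(Y)$ by Orlov's theorem. \emph{(iii) Exceptionality and semiorthogonality.} One needs $\RHom_Y(\tau_{\geq -a}S, \tau_{\geq -a}S) = k$ and $\RHom_Y(\tau_{\geq -a}S, \tau_{\geq -b}S) = 0$ for $0 \leq a < b \leq n-2$. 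Expanding both arguments through the truncation triangles and using $\RHom_Y(j_*\mathcal{F}, j_*\mathcal{G}) \caniso \RHom_{\P^{n-1}}(\mathcal{F}, \mathcal{G}) \oplus \RHom_{\P^{n-1}}(\mathcal{F}(1), \mathcal{G})[-1]$ (which follows from $j^*j_*\mathcal{F} \caniso \mathcal{F} \oplus \mathcal{F}(1)[1]$ for the divisor $E$), one reduces these to $\RHom$-groups on $\P^{n-1}$ between twisted cotangent sheaves and their $\O(1)$-twists, all controlled by Beilinson's resolution.

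I expect step (iii) to be the main obstacle: (i) and (ii) are short, but (iii) requires organising the dévissage so that the cohomology inputs on $\P^{n-1}$ (Bott's formula and the endomorphism algebra of Beilinson's collection) combine to yield precisely the stated vanishing and one-dimensionality. An alternative that bypasses (iii) is to produce $\langle \tau_{\geq -(n-2)}S, \sldots, \tau_{\geq 0}S \rangle$ by left-mutating $\pi^*\Dbcoh(X)$ to the front of Orlov's decomposition: the mutation carries $j_*\O_E(-i)$ to the cone of a canonical map $\pi^*\pi_*\big(j_*\O_E(-i) \otimes \O_Y((n-1)E)\big) \to j_*\O_E(-i)$, and since $\O_Y((n-1)E)|_E = \O_E(1-n)$ and $\RGamma(\P^{n-1}, \O(1-n-i))$ is a single vector space placed in cohomological degree $n-1$, this source is a shift of a direct sum of copies of $S = \pi^*\O_x$; inspecting the cohomology sheaves of $S$ then identifies the cone, up to shift, with $\tau_{\geq -(i-1)}S$, and exceptionality and semiorthogonality follow for free because mutations take exceptional collections to exceptional collections.
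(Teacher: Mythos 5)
Your steps (i) and (ii) are fine: the membership computation $\RHom_Y(j_*\Omega^m(m),\pi^*G)\caniso (G|_x)\otimes\RGamma(\P^{n-1},\Omega^{n-1-m}(n-1-m))[-1]=0$ and the generation argument via the truncation triangles plus Beilinson's collection are correct. But step (iii) — that each $\tau_{\geq -k}S$ is exceptional and that $\RHom_Y(\tau_{\geq -a}S,\tau_{\geq -b}S)=0$ for $a<b$ — is exactly the substance of the proposition, and you leave it as a plan rather than a proof; moreover the splitting $j^*j_*\mathcal{F}\iso\mathcal{F}\oplus\mathcal{F}(1)[1]$ you want to feed into the dévissage is only immediate for line bundles (where the obstruction lives in $\Ext^2_{\P^{n-1}}(\O(l),\O(l+1))=H^2(\P^{n-1},\O(1))=0$) and needs justification for $\mathcal{F}=\Omega^m(m)$. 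The paper supplies precisely this missing content by a different mutation: inside $\langle\pi^*\Dbcoh(X),\O_E,\ldots,\O_E(n-2)\rangle$ it identifies $\tau_{\geq -k}S$, up to shift, with the mutation of $\O_E(k)$ through $\langle\O_E,\ldots,\O_E(k-1)\rangle$, by checking membership in $T_k$, right orthogonality to $T_{k-1}$ (via the truncation triangle, $j^*j_*\O(l)\iso\O(l)\oplus\O(l+1)[1]$, Bott vanishing, and Grothendieck duality against $S=\pi^*\O_x$), and indecomposability; exceptionality and the stated order then come for free.

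The ``alternative that bypasses (iii)'' does not work as stated. First, the canonical map goes the other way: the component of ${}^\perp(\pi^*\Dbcoh(X))$ obtained by moving $j_*\O_E(-i)$ past $\pi^*\Dbcoh(X)$ is the fibre of the unit $j_*\O_E(-i)\to\pi^*\pi_*\bigl(j_*\O_E(-i)\otimes\omega_\pi\bigr)$, not the cone of a map out of $\pi^*\pi_*(\cdots)$. More seriously, $\pi^*\pi_*\bigl(j_*\O_E(-i)\otimes\O_Y((n-1)E)\bigr)\iso S^{\oplus r_i}[-(n-1)]$ with $r_i=h^{n-1}(\P^{n-1},\O(1-n-i))=\binom{n+i-2}{n-1}$, and for $i\geq 2$ one has $r_i>1$; since $j_*\O_E(-i)$ is a sheaf, the long exact sequence of cohomology sheaves of the fibre gives $\mH^{k}\iso j_*\Omega^{n-k}(n-k)^{\oplus r_i}$ for $2\leq k\leq n$, so the mutated object has cohomology sheaves with multiplicity $r_i>1$ and cannot be a shift of $\tau_{\geq -(i-1)}S$, whose cohomology sheaves are multiplicity free. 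Even for $i=1$ the identification with $\tau_{\geq 0}S$ requires knowing that the unit map is an isomorphism on $\mH^0$, which is not visible from the cohomology sheaves alone. So this shortcut does not replace (iii); to complete the argument you need either to carry out the $\RHom$ computations of your step (iii) in full, or to run the paper's mutation of $\O_E(k)$ through the other exceptional-divisor objects rather than through $\pi^*\Dbcoh(X)$.
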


\begin{proof}
  Let $j\colon E \to Y$ be the inclusion of the exceptional divisor $E \iso \P^{n-1}$. Denote the pushforward sheaf $j_*(\O_{\P^{n-1}}(k))$ by $\O_E$. There is a semiorthogonal decomposition for $Y$ constructed by Orlov \cite{orlov93}:
  \[
    \Dbcoh(Y) \caniso \langle \pi^*\Dbcoh(X), \O_E, \O_E(1), \ldots, \O_E(n-2) \rangle.
  \]
  Our decomposition is the dual to the part of Orlov's collection concentrated on the exceptional divisor. It can be obtained by a sequence of mutations described by the following claim.

  \begin{claim}
    For any $k \in [0, n-2]$ denote by $T_k$ the subcategory
    \[
      T_k := \langle \,\, \O_E, \O_E(1), \ldots, \O_E(k) \,\, \rangle \subset \Dbcoh(Y).
    \]
    Let $M$ be the mutation of $\O_E(k) \in T_k$ through the subcategory $T_{k-1} \subset T_k$:
    \[
      \begin{aligned}
        \langle \, \O_E, \O_E(1), \ldots, \O_E(k-1), \highlight{\O_E(k)} \, \rangle
        \rightsquigarrow
        \langle \, \highlight{M}, \O_E, \O_E(1), \ldots, \O_E(k-1) \, \rangle.
      \end{aligned}
    \]
    Then the exceptional object $M$ is, up to a shift, isomorphic to $\tau_{\geq -k}S$.
  \end{claim}

  A repeated application of this claim constructs the expected semiorthogonal decomposition. To prove the claim, it is enough to show three statements about the truncation $\tau_{\geq -k}S$:
  \begin{enumerate}
  \item[(a)] $\tau_{\geq -k}S \in T_k$;
  \item[(b)] $\tau_{\geq -k}S \in (T_{k-1})^\perp$;
  \item[(c)] $\tau_{\geq -k}S$ is not a direct sum of several copies of shifts of the same object.
  \end{enumerate}
  Indeed, any object in $T_k$ which lies in the right orthogonal to $T_{k-1}$ lies in the subcategory generated by the exceptional object $M$. This category is equivalent to the derived category of vector spaces, so if (c) also holds, then $\tau_{\geq -k}(\pi^*\O_x)$ is a shift of the generating exceptional object.

  The statement (c) easily follows from the description of cohomology sheaves of $S$ given in Lemma~\ref{lem: cohomology sheaves for blow-ups}. To prove the statement (a), note the inclusion of subcategories
  \[
    T_k = \langle \, \O_E, \O_E(1), \ldots, \O_E(k) \, \rangle \,\, \supset \,\, j_* \left(\,\, \langle \O_{\P^{n-1}}, \O_{\P^{n-1}}(1), \ldots, \O_{\P^{n-1}}(k) \rangle \subset \Dbcoh(\P^{n-1}) \,\, \right).
  \]
  On the projective space $E \iso \P^{n-1}$ it is easy to see that $\Omega^k(k) \in \langle \O, \O(1), \ldots, \O(k) \rangle$. Together with Lemma~\ref{lem: cohomology sheaves for blow-ups}, this implies that every cohomology sheaf of $\tau_{\geq -k}S$ lies in $T_k$. Since any object lies in the span of its cohomology sheaves, this proves (a).

  To deal with (b), we consider the truncation triangle
  \[
    \tau_{\leq -(k+1)}S \to S \to \tau_{\geq -k}S.
  \]
  Below we will show that both $\tau_{\leq -(k+1)}S$ and $S$ lie in $(T_{k-1})^\perp$, and therefore the same is true for the third object in the triangle.

  We start with the truncation $\tau_{\leq -(k+1)}S$. It is enough to show that every cohomology sheaf of that object lies in $(T_{k-1})^\perp$, and by Lemma~\ref{lem: cohomology sheaves for blow-ups} those sheaves are isomorphic to the pushforwards $j_*(\Omega^m(m))$ for all $m \in [k+1, n]$. The category $T_{k-1}$ is, by definition, generated by the pushforwards $j_*\O_E(l)$ for $l \in [0, k-1]$. For any choice of $l$ and $m$ we get
  \[
    \RHom_Y(j_*\O_E(l), j_*\Omega^m(m)) \caniso \RHom_{\P^{n-1}}(j^*j_*\O_{\P^{n-1}}(l), \Omega^m(m)).
  \]
  Since $j$ is an inclusion of a divisor, it is easy to compute that $j^*j_*\O(l) \iso \O(l) \oplus \O(l+1)[1]$. In our situation $m \geq l-2$, so on $\P^{n-1}$ both $\O(l)$ and $\O(l+1)$ are semiorthogonal to $\Omega^m(m)$. Therefore the space $\RHom_Y(j_*\O_E(l), j_*\Omega^m(m))$ vanishes for relevant values of $l$ and $m$, and hence $\tau_{\leq -(k+1)}S \in (T_{k-1})^\perp$.

  Consider now the object $S \iso \pi^*\O_x$. Since $\pi$ is a proper map between smooth varieties, it is well-known that there exists a right adjoint $\pi^!$ to the pushforward functor $\pi_*$, and it is given by the formula $\pi^!(-) = \pi^*(-) \otimes \omega_\pi$, where $\omega_\pi$ is the relative dualizing line bundle. In our case $\omega_\pi \iso K_Y \otimes \pi^*K_X^\dual \iso \O_Y((n-1)E)$. Thus
  \[
    \begin{aligned}
      \RHom_Y(j_*\O_E(l), \pi^*\O_x) & \caniso \RHom_Y(j_*\O_E(l) \otimes \omega_\pi, \, \pi^!(\O_x)) \caniso \\
      & \caniso \RHom_X(\pi_*(j_*\O_E(l) \otimes \O_Y((n-1)E)), \O_x)) \caniso \\
      & \caniso \RGamma(\P^{n-1}, \O_{\P^{n-1}}(l - n + 1))^\dual \otimes \RHom(\O_x, \O_x).
    \end{aligned}
  \]
  Since $l \leq k \leq n-2$, the line bundle $\O_{\P^{n-1}}(l-n+1)$ is acyclic, which means that $S \in (T_{k-1})^\perp$. This finishes the proof of the statement (b), and thus the proof of the entire proposition.
\end{proof}

\printbibliography

\end{document}